\documentclass[leqno,12pt,a4paper]{amsart}
%\documentclass{proc-l}

	%	++++++++++++++++++++++++++++++++++++++++++++++++++++++
	%
	%		P a c k a g e s 
	%
	%	++++++++++++++++++++++++++++++++++++++++++++++++++++++

\usepackage{amsmath}
\usepackage{amsfonts}
\usepackage{amssymb}
\usepackage{amsthm} %proof
\usepackage{mathtools}\mathtoolsset{showonlyrefs=true}
\usepackage{bm}
\usepackage{float}
\usepackage{stackengine}
\allowdisplaybreaks[1]

	%	Delete the following lines before typesetting to PAMS.

\usepackage{color}
\usepackage{pdfsync}
\usepackage[setpagesize=false]{hyperref}
\usepackage[hdivide={2.5cm,,2.5cm}, vdivide={3.5cm,,2.8cm}]{geometry}

	%	++++++++++++++++++++++++++++++++++++++++++++++++++++++
	%
	%		A r t i c l e    I n f o r m a t i o n 
	%
	%	++++++++++++++++++++++++++++++++++++++++++++++++++++++

\title[Continuous Evolution Families]{Continuous Evolution Families}
\author[S. Hoshinaga]{Shota Hoshinaga}
\author[I. Hotta]{Ikkei Hotta}
\author[H. Yanagihara]{Hiroshi Yanagihara}
\subjclass[2020]{Primary 30C80, Secondary 30C55}
\keywords{evolution family, reverse evolution family, subordination, univalent function, non-commutative probability, Hamel function}
\thanks{The second author was supported by JSPS Grant-in-Aid for Scientific Research(C) 20K03632}
\thanks{The third author was supported by JSPS Grant-in-Aid for Scientific Research(C) 19K03519}
\address{Department of Applied Science, Yamaguchi University 2-16-1 Tokiwadai, Ube 755-8611, Japan}
\email{hoshinaga0727@gmail.com}
\email{ihotta@yamaguchi-u.ac.jp}
\email{hiroshi@yamaguchi-u.ac.jp}
\date{\today}

	%	++++++++++++++++++++++++++++++++++++++++++++++++++++++
	%
	%		T h e o r e m s 
	%
	%	++++++++++++++++++++++++++++++++++++++++++++++++++++++

\newtheorem{theorem}{Theorem}
\newtheorem{lemma}[theorem]{Lemma}

\newtheorem{problem}[theorem]{Problem}

\theoremstyle{definition}
\newtheorem{definition}[theorem]{Definition}
\newtheorem{remark}[theorem]{Remark}

%\newtheorem{problem}[theorem]{\textcolor{red}{\huge Problem}}

	%	++++++++++++++++++++++++++++++++++++++++++++++++++++++
	%
	%		M a c r o s
	%
	%	++++++++++++++++++++++++++++++++++++++++++++++++++++++

%riemann surface
\newcommand{\C}{\mathbb{C}}%complex plane
\newcommand{\D}{\mathbb{D}}%unit disk
%boundary of the unit disk
%half-plane

\newcommand{\N}{\mathbb{N}}%natural number
\newcommand{\R}{\mathbb{R}}%real number
%rational number

%analytic functions
%univalent functions
\newcommand{\B}{\mathfrak{B}}%Bazilevic functions   
%close-to-convex functions
%Noshiro-Warschawski functions
%convex functions
%caratheodory functions
%locally univalent functions
%subclass of Bazilevic functions

%\numberwithin{figure}{section}
\numberwithin{equation}{section}
\numberwithin{theorem}{section}

\DeclareMathOperator{\id}{id}

\def\2sidelim{%
\lim_{{\substack{\scriptscriptstyle s \leq c \leq t \\%
\scriptscriptstyle  t - s \searrow 0} }}}

\def\qed{\hfill $\Box$}

	%	++++++++++++++++++++++++++++++++++++++++++++++++++++++
	%
	%		D o c u m e n t
	%
	%	++++++++++++++++++++++++++++++++++++++++++++++++++++++
	
\begin{document}

\begin{abstract}
Recently in relation to the theory of non-commutative probability, a notion of evolution families $\{\omega_{s,t}\}_{s \le t}$ is generalized that are only continuous in parameters, namely $(s,t) \mapsto \omega_{s,t}$ is continuous with respect to locally uniform convergence on a planar domain.
In this article we present various equivalence conditions to the continuous evolution families concerned with the left and right parameters.
We also provide an example of a discontinuous evolution family in the last section.
\end{abstract}

\maketitle

\vspace{-20pt}

\tableofcontents

\section{Introduction}

\subsection{Evolution families}
A theory of one/two-parameter families of holomorphic functions is seen as a key ingredient in various areas of analysis, e.g. geometric function theory, operator theory, probability theory and so on. In this article we focus on the so-called evolution families, which play an essential role in the theory of the Loewner equations. 
It is usually assumed to be absolutely continuous (which implies a.e. differentiability) in time, see for example \cite{BracciCD:evolutionI}. 
%Then there is one-to-one correspondence between such a family of functions and the differential equation. 
Then such a family of functions can be characterized by the ordinary differential equation.

Recently, in connection with non-commutative probability, the following general concept of evolution families was introduced.

\begin{definition}[{\cite[Definition 3.1]{MR4152669}}]
\label{reverse-evolution}
A two-parameter family $\{f_{s,t}\}_{0 \le s \le t < \infty}$ of holomorphic self-mappings $f_{s,t} : D \to D$ on some planar domain $D$ is said to be a \textit{reverse evolution family}  if the following conditions are satisfied;
\begin{enumerate}%[label=\rm(TM\arabic*)]
\item[(TM1)] \label{TM1} $f_{s,s}(z) =z$ for all $z \in D$ and $s \ge 0$,
\item[(TM2)] \label{TM2} $f_{s,u} \circ f_{u,t} = f_{s,t}$ for all $0 \le s \le u \le t < \infty$,
\item[(TM3)] \label{TM3} $(s,t)\mapsto f_{s,t}$ is continuous with respect to the topology of locally uniform convergence on $D$.
\end{enumerate} 
%and each mapping $f_{s,t}$ is called a \textit{transition mapping}. 
\end{definition}

In the above definition, $\{f_{s,t}\}$ needs not to be differentiable in parameters, and hence one cannot expect to obtain a usual differential equation it satisfies. 
Some researches contribute to this difficulty, see \cite{HasebeHotta2021} and \cite{Yanagihara:2022}.
%There are some previous researches for this difficulty, see \cite{HasebeHotta2021} and \cite{Yanagihara:2022}.

In the study of the above general evolution families, we have come to ask ourselves the following question; \textit{can the condition (TM3) be replaced to more weak one, for example locally uniform continuity in either parameter s or t?}
In this article we give some answer to this problem.

\subsection{Notations and definitions}
Let $\mathbb{C}$ be the complex plane and 
$\mathcal{A}$ be the space of all analytic functions $f$ 
in the unit disk $\mathbb{D} := \{ z \in \mathbb{C} : |z| < 1 \}$
endowed with the topology of locally uniformly convergence on $\mathbb{D}$.
Let 
$\B 
:= \{ f \in \mathcal{A} : f( \mathbb{D}) \subset \mathbb{D} \} 
\subset \mathcal{A}$.
We denote the identity mapping on $\mathbb{D}$ by $\id_{\mathbb{D}}$.

%For 
%$z_0, z_1 \in \mathbb{D}$,
%the \textit{pseudo-hyperpolic distance 
%$d^*(z_0,z_1)$ between 
%$z$ and $w$} is  defined by
%\begin{align}
% d^*(z_0,z_1) := & \, \frac{|z_0-z_1|}{|1-\overline{z_1}z_0|}.
%%\\
%%  d(z_0,z_1) = & \, 
%% \log \frac{1+\frac{|z_0-z_1|}{|1-\overline{z_1}z_0|} }
%% {1-\frac{|z_0-z_1|}{|1-\overline{z_1}z_0|} } .
%\end{align}
%The Schwarz-Pick lemma states
%that each $\omega \in \B$ is distance decreasing 
%with respect to the pseudo-hyperbolic distance, i.e.,
%\begin{equation}
%   d^*(\omega(z_0),\omega(z_1)) \leq d^*(z_0,z_1) \quad 
%   \text{ for all } z_0, z_1 \in \mathbb{D} .
%\end{equation}

Next, we give a definition of evolution families in a naive form.
Here $I \subset [- \infty, \infty  ]$ be an interval and
$I_+^2 := \{ (s,t) : s,t \in I \textrm{ with } s \leq t \}$.
\begin{definition}
\label{def:evolutiuon-family}
A family $\{ \omega_{s,t} \}_{(s,t) \in I_+^2}$ in $\B$
is said to be
an \textit{evolution family} if it has the following three properties;
\begin{itemize}
 \item[(EF1)]
 $\omega_{s,t} $ is non-constant for all $(s,t) \in I_+^2$,
 \item[(EF2)] 
$\omega_{t,t} = \id_\mathbb{D}$ for all $t \in I$,
 \item[(EF3)]
$\omega_{u,t} \circ \omega_{s,u} = \omega_{s,t}$
for $s,u,t \in I$ with $s \leq u \leq t$.
\end{itemize} 
\end{definition}

Let $\Lambda$ be a metric space and $\lambda_0 \in \Lambda$.
A family $\{ f_\lambda \}_{\lambda \in \Lambda}$ 
in $\mathcal{A}$ is said to be \textit{continuous at $\lambda_0$} if
the mapping $\Lambda \ni \lambda \mapsto f_\lambda \in \mathcal{A}$ 
is continuous at $\lambda_0$,
i.e., 
$f_\lambda$ 
converges locally uniformly to $f_{\lambda_0}$ on $\mathbb{D}$
as $\lambda \rightarrow \lambda_0$.
When $\{ f_\lambda \}_{\lambda \in \Lambda}$ is continuous at 
all $\lambda_0 \in \Lambda$, we simply say
\textit{$\{ f_\lambda \}_{\lambda \in \Lambda}$ is continuous}.
It is easy to see that 
if $\{ f_\lambda \}_{\lambda \in \Lambda}$ and $\{ g_\lambda \}_{\lambda \in \Lambda}$ 
are continuous, then 
$\{ \alpha f_\lambda + \beta g_\lambda \}_{\lambda \in \Lambda}$ 
is continuous for each $\alpha, \beta \in \mathbb{C}$.
Furthermore
when $\{ g_\lambda \}_{\lambda \in \Lambda} \subset \B$,
 $\{ f_\lambda \circ g_\lambda \}_{\lambda \in \Lambda}$ 
is also continuous.

Now we add some continuity to the above evolution families as follows.
\begin{definition}
An evolution family $\{ \omega_{s,t} \}_{(s,t) \in I_+^2}$ is said to be \textit{jointly continuous} if;
\begin{itemize}
 \item[(EF4)] the mapping $I_+^2 \ni (s,t) \mapsto \omega_{s,t} \in \mathcal{A}$
is continuous.
\end{itemize}
\end{definition}

\begin{remark}
It is known in \cite[Lemma 3.5]{MR4152669} that (EF1) follows from (EF2)-(EF4).
In other words, (EF1) is superfluous to define jointly continuous evolution families (compare with \cite[Definition 3.1]{MR4152669}). 
\end{remark}

\begin{remark}
A simple parameter change gives the local duality between evolution families and reverse evolution families.
%Let $-I := \{x : -x \in I\}$. 
Precisely, one can show that $\{ \omega_{s,t} \}_{(s,t) \in I_+^2}$ is a jointly continuous evolution family if and only if $\{ f_{s,t} := \omega_{-t, -s} \}_{(s,t) \in -I_+^2}$ is a reverse evolution family in the sense of Definition \ref{reverse-evolution}, where $-I := \{x : -x \in I\}$  (see also \cite{contreraslocalduality}).
Hence our results for jointly continuous evolution families immediately can be applied to reverse evolution families.
\end{remark}

We will introduce some more terminologies to state our main results.
In the case that $I=[a,b]$ with $-\infty < a < b < \infty$,
we say $\{ \omega_{s,t} \}_{(s,t) \in I_+^2}$ is \textit{continuous 
with respect to the right} (or \textit{left}) \textit{parameter} if
the mapping $I \ni t \mapsto \omega_{a,t} \in \mathcal{A}$ 
(or $I \ni s \mapsto \omega_{s,b} \in \mathcal{A}$, respectively) 
is  continuous.
%This We employ this weak assumption than the usual right continuity
This is a somewhat weaker condition than the usual continuity in the right (or left) parameter, i.e., \textit{for all fixed $c \in [a,b]$,  $[c,b] \ni t \mapsto \omega_{c,t} \in \mathcal{A}$ is  continuous}.
As a mater of fact, the above usual continuity follows from our weak continuity by virtue of the semigroup property of the evolution family (EF3). % we have employed.

We say that an evolution family $\{ \omega_{s,t} \}_{(s,t) \in I_+^2}$ 
is \textit{hyperbolically bounded} if 
\begin{equation}
\label{equationhyp_bdd}
  \sup_{(s,t) \in I_+^2} |\omega_{s,t}(0) | < 1 
\end{equation}
and \textit{locally hyperbolically bounded} if 
$\{ \omega_{s,t} \}_{(s,t) \in I_+^2}$ 
is hyperbolically bounded on any compact subinterval of $I$.
Notice that in \eqref{equationhyp_bdd}, $\omega_{s,t}(0)$ can be replaced to $\omega_{s,t}(z_{0})$ for any fixed $z_0 \in \mathbb{D}$ to obtain equivalent conditions.
This easily follows from Lemma \ref{lemma:basic-estimatesI}
which will be stated in the next section.
It is also evident that a jointly continuous evolution family is locally hyperbolically bounded.
%In fact, if $\{ \omega_{s,t} \}_{(s,t) \in I_+^2}$ is a jointly continuous evolution family and $J$ is a compact subinterval of $I$, then $\omega_{s,t}(0)$ is hyperbolically bounded on $J_+^2$.
%Let $\{ \omega_{s,t} \}_{(s,t) \in I_+^2}$ be a jointly continuous evolution family.
%If $J$ is a compact subinterval of $I$,
%then $\omega_{s,t}(0)$ is hyperbolically bounded on
%$J_+^2$.
%Therefore jointly continuous evolution families are locally hyperbolically bounded.

\subsection{Main results}

We are ready to present the main results.
In brief, jointly continuity follows from continuity with respect to the right parameter, but not immediately from the left parameter.

Our first theorem is about the right parameter of an evolution family.
\begin{theorem}
\label{thm:right_parameter} 
Let $\{ \omega_{s,t} \}_{(s,t) \in I_+^2}$ be an evolution family
on $I=[a,b]$ with $-\infty < a < b< \infty$. 
Then the following are equivalent;
\begin{itemize}
 \item[{\rm (i)}]
The family $\{ \omega_{s,t} \}_{(s,t) \in I_+^2}$ is jointly continuous,
 \item[{\rm (ii)}]
The family $\{ \omega_{s,t} \}_{(s,t) \in I_+^2}$
is continuous with respect to the right parameter,
%i.e., $\{ \omega_{a,t} \}_{t \in I}$ is continuous in $t \in [a,b]$.
 \item[{\rm (iii)}]
For each fixed $z_0 \in \mathbb{D}$, 
$\omega_{a,t}(z_0)$ and $\omega_{a,t}'(z_0)$ are continuous functions  
of $t \in [a,b]$,
 \item[{\rm (iv)}]
For some $z_0 \in \mathbb{D}$, 
$\omega_{a,t}(z_0)$ and $\omega_{a,t}'(z_0)$ are continuous functions 
of $t \in [a,b]$.
\end{itemize}
Furthermore in these cases 
$\omega_{s,t}$ is univalent on $\mathbb{D}$ for each $(s,t) \in I_+^2$.
\end{theorem}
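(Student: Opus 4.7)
The plan is to prove the cyclic chain (i) $\Rightarrow$ (ii) $\Rightarrow$ (iii) $\Rightarrow$ (iv) $\Rightarrow$ (i). The first three implications are direct: (i) $\Rightarrow$ (ii) is the restriction of joint continuity to the slice $s = a$; (ii) $\Rightarrow$ (iii) follows from the Weierstrass convergence theorem, which upgrades locally uniform convergence of holomorphic functions to pointwise convergence of both values and derivatives; and (iii) $\Rightarrow$ (iv) is a trivial specialization. All the substance lies in the implication (iv) $\Rightarrow$ (i) together with the concluding univalence assertion.

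For (iv) $\Rightarrow$ (i), I would set $g_t := \omega_{a,t}$ and first upgrade the pointwise data at $z_0$ to locally uniform continuity of $t \mapsto g_t$. Since $\{g_t\}_{t \in [a,b]} \subset \B$ is a normal family, along any $t_n \to t^*$ one extracts a locally uniform subsequential limit $g$; condition (iv) forces $g \in \B$, not a boundary constant, because $g(z_0) = \omega_{a,t^*}(z_0) \in \D$. To identify $g$ with $g_{t^*}$, I would exploit the semigroup factorization (EF3): for $t_n \geq t^*$ one has $g_{t_n} = \omega_{t^*, t_n} \circ g_{t^*}$, and evaluating at $z_0$ together with the chain rule gives
\begin{equation*}
\omega_{t^*, t_n}(w_0) \longrightarrow w_0, \qquad \omega_{t^*, t_n}'(w_0) \longrightarrow 1,
\end{equation*}
at $w_0 := g_{t^*}(z_0)$, provided $g_{t^*}'(z_0) \ne 0$. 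The Schwarz--Pick rigidity lemma (a self-map of $\D$ fixing a point with unit derivative there must be $\id_\D$) then forces every subsequential limit of $\omega_{t^*, t_n}$ to equal $\id_\D$, whence $g_{t_n} \to g_{t^*}$. The left-sided case $t_n \nearrow t^*$ is handled symmetrically via $g_{t^*} = \omega_{t_n, t^*} \circ g_{t_n}$ once univalence of the $g_{t_n}$ is available.

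Univalence of every $g_t$ is proved in parallel by a connectedness argument on the set $S \subset [a,b]$ of parameters $t$ for which $g$ is continuous on $[a,t]$ and $g_s$ is univalent for every $s \in [a,t]$. One verifies that $a \in S$ (since $g_a = \id_\D$); that $S$ is closed (if $t_n \nearrow t^*$ with $t_n \in S$, any subsequential limit $g$ of $g_{t_n}$ is non-constant --- else $g_{t^*}$ would be constant, contradicting (EF1) --- hence univalent by Hurwitz, giving $g'(z_0) = g_{t^*}'(z_0) \ne 0$ and allowing the rigidity step to identify $g$ with $g_{t^*}$); and that $S$ is open (at $t^* \in S$, the rigidity step from the right extends continuity just past $t^*$, and the factorization $g_t = \omega_{t^*, t} \circ g_{t^*}$ together with $\omega_{t^*, t} \to \id_\D$ propagates univalence). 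Hence $S = [a,b]$. Joint continuity of $\omega_{s,t}$ then follows: for $(s_n, t_n) \to (s^*, t^*)$, any subsequential limit $\omega^*$ of $\omega_{s_n, t_n}$ satisfies $\omega^* \circ g_{s^*} = g_{t^*} = \omega_{s^*, t^*} \circ g_{s^*}$ on $\D$, and analytic continuation from the nonempty open set $g_{s^*}(\D)$ yields $\omega^* = \omega_{s^*, t^*}$. Univalence of each $\omega_{s,t}$ then follows from $g_t = \omega_{s,t} \circ g_s$ with both $g_s$ and $g_t$ univalent.

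The main technical obstacle is the interlocked nature of the continuity and univalence bootstrap: the Schwarz--Pick rigidity step requires the non-vanishing $g_{t^*}'(z_0) \ne 0$, which is in turn guaranteed by univalence of $g_{t^*}$, while propagating univalence just past $t^*$ relies on the same rigidity-based continuity. The interleaved induction on $S$ resolves this circularity, but care is needed at the openness step to ensure that univalence of $g_{t^*}$ on all of $\D$ (and not merely on compact subsets) is transferred to $g_t$ for $t$ slightly greater than $t^*$, which requires controlling how small perturbations of the outer factor $\omega_{t^*, t}$ act on the image $g_{t^*}(\D)$.
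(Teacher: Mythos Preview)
Your rigidity-based route is genuinely different from the paper's and the core idea is sound: the paper instead sets $c(t):=\omega_{a,t}(z_0)$, conjugates by the disk automorphisms $\sigma_{c(t)}$ to produce a new evolution family $\tilde\omega_{s,t}:=\sigma_{c(t)}^{-1}\circ\omega_{s,t}\circ\sigma_{c(s)}$ fixing the origin, and then invokes a dedicated lemma for that normalized case. That lemma first proves $\tilde\omega_{a,t}'(0)\neq 0$ for all $t$ by an iterative Schwarz-type estimate, then gets joint continuity from an explicit bound $|\omega(z)-z|\le |1-\omega'(0)|\,|z|(1+|z|)/(1-|z|)$, and finally proves univalence by partitioning $[s_0,t_0]$ finely and applying Landau's theorem to each factor. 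Your Schwarz--Pick rigidity step (a self-map with a fixed point and unit derivative there is $\id_\D$) replaces the explicit distance estimate elegantly.

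The gap you flag at the openness step is, however, real and not closed by what you wrote. From $\omega_{t^*,t}\to\id_\D$ locally uniformly you only get, via Hurwitz, univalence of $\omega_{t^*,t}$ on each fixed compact subset of $\D$; you cannot conclude that $g_t=\omega_{t^*,t}\circ g_{t^*}$ is univalent on all of $\D$ for $t$ near $t^*$, so the induction on your set $S$ stalls. The paper's Landau theorem is precisely the missing quantitative tool here. A repair that stays within your framework: for each fixed $r\in(|z_0|,1)$ run the connectedness argument on
\[
S_r:=\{t:\ g\text{ is continuous on }[a,t]\text{ and }g_s\text{ is univalent on }\D(r)\text{ for all }s\in[a,t]\},
\]
where openness \emph{does} go through because $g_{t^*}(\overline{\D(r)})$ is compact in $\D$, so Hurwitz applies to $\omega_{t^*,t}$ there; then let $r\nearrow 1$. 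The same defect recurs in your last sentence: from $g_t=\omega_{s,t}\circ g_s$ with $g_s,g_t$ univalent you obtain univalence of $\omega_{s,t}$ only on $g_s(\D)$, not on $\D$; you must re-run the argument with base point $s$ in place of $a$ (or invoke Landau as the paper does).
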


\begin{remark}
For univalence of $\omega_{s,t}$, see also \cite[Theorem 3.16]{MR4152669}.
\end{remark}

Next, we show some equivalence conditions concerned with the left parameter of an evolution family.
%for an evolution family is  jointly continuous.

\begin{theorem}
\label{thm:continuity-wrt-s}
Let $\{ \omega_{s,t} \}_{(s,t) \in I_+^2}$ be an evolution family
on $I=[a,b]$ with $-\infty < a < b< \infty$. 
Then the following are equivalent;
\begin{itemize}
 \item[{\rm (i)}]
The family $\{ \omega_{s,t} \}_{(s,t) \in I_+^2}$ is jointly continuous, 
 \item[{\rm (v)}]
The family $\{ \omega_{s,t} \}_{(s,t) \in I_+^2}$ is hyperbolically bounded and continuous with respect to the left parameter, 
% \item[{\rm (vi)}]
%The family $\{ \omega_{s,t} \}_{(s,t) \in I_+^2}$ is hyperbolically bounded and $\omega_{s,b}$ is univalent on $\D$ for all $s \in [a,b]$,
  \item[{\rm (vi)}]
The family $\{ \omega_{s,t} \}_{(s,t) \in I_+^2}$ is hyperbolically bounded,
$\2sidelim \omega_{s,t}(0) = 0$ for all $c \in [a,b]$
and $\omega_{s,b}'(0)$ is a continuous function of $s \in [a,b]$
with $\omega_{s,b}'(0) \not= 0$ for $s \in [a,b]$. 
\end{itemize}
\end{theorem}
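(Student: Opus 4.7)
The plan is to prove the cycle (i) $\Rightarrow$ (v) $\Rightarrow$ (vi) $\Rightarrow$ (i). The implication (i) $\Rightarrow$ (v) is immediate: joint continuity restricts to continuity in $s$ of $\{\omega_{s,b}\}$, and $\sup_{I_+^2}|\omega_{s,t}(0)|<1$ because the continuous image of the compact set $I_+^2$ in $\D$ is a compact subset of $\D$.

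For (v) $\Rightarrow$ (vi), continuity of $\omega_{s,b}'(0)$ in $s$ is immediate since evaluation of the derivative at a fixed point is continuous with respect to locally uniform convergence on $\A$. The non-vanishing of $\omega_{s,b}'(0)$ and the two-sided limit $\omega_{s,t}(0)\to 0$ both reduce to univalence of $\omega_{c,b}$: once that is in hand, the identity $\omega_{s,b}=\omega_{c,b}\circ\omega_{s,c}$ combined with normality of $\{\omega_{s,c}\}$ and the hypothesis $\omega_{s,b}\to\omega_{c,b}$ shows that every subsequential limit $f$ of $\omega_{s,c}$ as $s\to c^-$ satisfies $\omega_{c,b}\circ f=\omega_{c,b}$, hence $f=\id_{\D}$; the symmetric argument handles $t\to c^+$. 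The required univalence would be obtained by a Hurwitz-type propagation argument starting from $\omega_{b,b}=\id_{\D}$ together with Lemma \ref{lemma:basic-estimatesI}.

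The main step is (vi) $\Rightarrow$ (i), which I split into a diagonal step and an extension step. For the diagonal step I claim that for every $c\in[a,b]$ and every sequence $(s_n,t_n)$ with $s_n\le c\le t_n$ and $t_n-s_n\to 0$, $\omega_{s_n,t_n}\to\id_{\D}$ locally uniformly on $\D$. By hyperbolic boundedness, $\{\omega_{s_n,t_n}\}$ and $\{\omega_{t_n,b}\}$ are normal families, so after passing to a subsequence one has $\omega_{s_{n_k},t_{n_k}}\to f\in\B$ and $\omega_{t_{n_k},b}\to g\in\B$ locally uniformly. Condition (vi) gives $f(0)=0$, while continuity of $\omega_{s,b}'(0)$ in $s$ yields $g'(0)=\omega_{c,b}'(0)$. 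Differentiating $\omega_{s_n,b}=\omega_{t_n,b}\circ\omega_{s_n,t_n}$ at $0$ produces
\begin{equation*}
\omega_{s_n,b}'(0)=\omega_{t_n,b}'\bigl(\omega_{s_n,t_n}(0)\bigr)\cdot\omega_{s_n,t_n}'(0),
\end{equation*}
and passing to the limit along the subsequence, using $\omega_{s_n,t_n}(0)\to 0$ together with local uniform convergence of derivatives on a compact neighborhood of $0$, yields $\omega_{c,b}'(0)=\omega_{c,b}'(0)\cdot f'(0)$. Since $\omega_{c,b}'(0)\ne 0$ by (vi), $f'(0)=1$, and Schwarz's lemma forces $f=\id_{\D}$; uniqueness of the subsequential limit upgrades this to convergence of the full sequence.

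For the extension step, joint continuity at an arbitrary $(s_0,t_0)\in I_+^2$ is obtained by a case analysis on the signs of $s_n-s_0$ and $t_n-t_0$: in each case (EF3) expresses $\omega_{s_n,t_n}$, or a longer composition containing it, as $\omega_{s_0,t_0}$ sandwiched by factors drawn from $\omega_{s_n,s_0}$, $\omega_{s_0,s_n}$, $\omega_{t_0,t_n}$, $\omega_{t_n,t_0}$, each of which lies in the diagonal regime and so converges to $\id_{\D}$; normality of $\{\omega_{s_n,t_n}\}$ and cancellation then identify its unique subsequential limit as $\omega_{s_0,t_0}$. The principal obstacle is the diagonal step above, where one must pass from the scalar data at $0$ supplied by (vi) to locally uniform convergence of the full functions, and it is precisely the normality plus Schwarz's lemma combination that carries this out.
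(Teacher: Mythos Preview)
Your cycle is (i)$\Rightarrow$(v)$\Rightarrow$(vi)$\Rightarrow$(i). The steps (i)$\Rightarrow$(v) and (vi)$\Rightarrow$(i) are sound: your diagonal step is the same chain-rule computation the paper uses to pass from (vi) to condition (viii) of Theorem~\ref{thm:cont_on_diagonal_set}, and your extension step is a legitimate inline replacement for the paper's route through that theorem and the right-parameter criterion.

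The genuine gap is in (v)$\Rightarrow$(vi). You correctly observe that both the non-vanishing of $\omega_{s,b}'(0)$ and the limit $\omega_{s,t}(0)\to 0$ would follow once every $\omega_{c,b}$ is known to be univalent on $\D$, and you then defer this univalence to ``a Hurwitz-type propagation argument starting from $\omega_{b,b}=\id_{\D}$.'' But Hurwitz runs in the wrong direction for such a propagation: it says that a locally uniform \emph{limit} of univalent maps is univalent (or constant), so it can only close the set $J:=\{c:\omega_{c,b}\text{ is univalent on }\D\}$ from the right. It cannot open $J$ to the left. Even after you deduce $\omega_{s,c}\to\id_{\D}$ as $s\nearrow c$ (which itself already uses univalence of $\omega_{c,b}$), you do not obtain univalence of $\omega_{s,c}$ on all of $\D$ for $s$ near $c$---only univalence on compacta whose size may shrink with $s$---so you cannot conclude that $\omega_{s,b}=\omega_{c,b}\circ\omega_{s,c}$ is univalent on $\D$.

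This is exactly the obstacle the paper isolates, and it is why the paper does not attempt (v)$\Rightarrow$(vi) at all but proves (v)$\Rightarrow$(i) directly by an open/closed argument on the set of $c$ for which joint continuity already holds on $[c,b]_+^2$. The missing ingredient for the ``open'' step is Landau's Lemma~\ref{lemma:Landau}, which from $\omega_{s,c}\to\id_{\D}$ yields univalence of $\omega_{s,c}$ on a disk $\D(\rho_0)$ with $\rho_0<1$, combined with the Schwarz--Pick bound of Lemma~\ref{lemma:basic-estimatesI} (via hyperbolic boundedness) to guarantee that all relevant values stay inside $\D(\rho_0)$; the paper packages this as Lemma~\ref{Lemma3-1}. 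Your one-line sketch, even with the reference to Lemma~\ref{lemma:basic-estimatesI}, does not supply this mechanism, and without it the implication (v)$\Rightarrow$(vi) is unproved.
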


In order to show Theorem \ref{thm:continuity-wrt-s}, we need the following further equivalences.
%Further, some more equivalence to jointly continuity are given. 
%Further, some more equivalence are obtained.
Here $D(I)\subset I_+^2$ is a diagonal set of $I_+^2$, namely $D(I) := \{ (t,t) : t \in I \}$.
%Next $\{ \omega_{s,t} \}_{(s,t) \in I_+^2}$ is jointly 
%continuous on $I_+^2$
%if and only if it is jointly continuous on the diagonal set $D(I)$.

\begin{theorem}
\label{thm:cont_on_diagonal_set}
Let $\{ \omega_{s,t} \}_{(s,t) \in I_+^2}$ 
be an evolution family on $I= [a,b]$ with $-\infty < a < b < \infty$. 
Then the following are equivalent;
\begin{itemize}
 \item[{\rm (i)}]
The family $\{ \omega_{s,t} \}_{(s,t) \in I_+^2}$ is jointly continuous,
 \item[{\rm (vii)}]
The family $\{ \omega_{s,t} \}_{(s,t) \in I_+^2}$ is jointly continuous
on $D(I)$,
 \item[{\rm (viii)}]
The family $\{ \omega_{s,t} \}_{(s,t) \in I_+^2}$ is hyperbolically bounded with
$\2sidelim \omega_{s,t}(0) = 0$ and $\2sidelim \omega_{s,t}'(0) = 1$
for all $c \in [a,b]$.
\end{itemize}
\end{theorem}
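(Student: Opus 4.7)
The plan is to prove the cycle (i) $\Rightarrow$ (vii) $\Rightarrow$ (viii) $\Rightarrow$ (i), the last implication carrying almost all the work. The first implication is immediate since condition (vii) is just the restriction of (EF4) to the diagonal $D(I) \subset I_+^2$.

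For (vii) $\Rightarrow$ (viii), the two-sided limits follow directly from the locally uniform convergence $\omega_{s,t} \to \omega_{c,c} = \id_\D$ on the diagonal, by evaluating at $z=0$ and applying Weierstrass' theorem for the derivative. For hyperbolic boundedness I would combine diagonal continuity with compactness of $[a,b]$ to produce $\delta > 0$ and $M_0 < 1$ such that $|\omega_{s,t}(0)| \le M_0$ whenever $t - s < \delta$, and then extend to arbitrary $(s,t) \in I_+^2$ by partitioning $[s,t]$ into $N \le \lceil (b-a)/\delta \rceil$ pieces of length less than $\delta$, expressing $\omega_{s,t}$ via (EF3) as the corresponding composition, and iterating the Schwarz--Pick consequence $d_\D(0, f(g(0))) \le d_\D(0, f(0)) + d_\D(0, g(0))$ to get $d_\D(0, \omega_{s,t}(0)) \le N \cdot d_\D(0, M_0)$, a uniform finite hyperbolic bound yielding \eqref{equationhyp_bdd}.

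The substantive step is (viii) $\Rightarrow$ (i), which I would reduce to condition (ii) of Theorem \ref{thm:right_parameter}. First, I would upgrade the scalar limits of (viii) to locally uniform convergence $\omega_{s,t} \to \id_\D$ for $(s,t) \to (c,c)$ with $s \le c \le t$: hyperbolic boundedness makes $\{\omega_{s,t}\}$ a normal family, and any subsequential limit $\phi$ is non-constant (since $\phi'(0) = 1 \ne 0$), fixes $0$, and has $\phi'(0) = 1$, so the Schwarz lemma forces $\phi = \id_\D$. To see continuity of $t \mapsto \omega_{a,t}$ at $t_0$, I split into right and left approaches. For $t_n \ge t_0$, the decomposition $\omega_{a,t_n} = \omega_{t_0, t_n} \circ \omega_{a, t_0}$ yields the conclusion, since $\omega_{t_0, t_n} \to \id_\D$ locally uniformly on $\D$ and $\omega_{a, t_0}$ carries compacts of $\D$ into compacts of $\D$. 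For $t_n \le t_0$, I would read the dual identity $\omega_{a, t_0} = \omega_{t_n, t_0} \circ \omega_{a, t_n}$ as follows: a subsequential locally uniform limit $\phi \in \B$ of $\omega_{a, t_n}$ (extracted by normality and hyperbolic boundedness) must satisfy $\omega_{a, t_0} = \id_\D \circ \phi = \phi$, so the full sequence converges to $\omega_{a, t_0}$. Theorem \ref{thm:right_parameter} then delivers joint continuity.

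The principal difficulty is the left-limit subcase above: because the $\omega_{t_n, t_0}$ need not be invertible, one cannot directly solve for $\omega_{a, t_n}$, and the limit must be identified indirectly through the semigroup identity together with normal-family compactness. Passing to the limit inside the composition $\omega_{t_n, t_0} \circ \omega_{a, t_n}$ requires the observation that the limit $\phi \in \B$, being non-constant (by (EF1) applied to $\omega_{a,t_0}$), sends compacts of $\D$ into compacts of $\D$; combined with uniform convergence $\omega_{t_n, t_0} \to \id_\D$ on compacts and Cauchy-estimate equicontinuity of $\{\omega_{t_n, t_0}\}$, this closes the argument.
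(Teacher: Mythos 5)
Your proposal is correct and follows essentially the same route as the paper: the cycle (i)$\Rightarrow$(vii)$\Rightarrow$(viii)$\Rightarrow$(i), with hyperbolic boundedness obtained by a compactness/partition argument and finitely many iterations of the Schwarz--Pick composition estimate (your hyperbolic-distance additivity is exactly the paper's $\varepsilon_k$ recursion), and (viii)$\Rightarrow$(i) reduced to condition (ii) of Theorem \ref{thm:right_parameter} via the same split into a right approach using $\omega_{a,t}=\omega_{c,t}\circ\omega_{a,c}$ and a left approach identifying a normal-family subsequential limit through the semigroup identity. The only minor divergence is that you upgrade the scalar limits in (viii) to locally uniform convergence $\omega_{s,t}\to\id_{\mathbb{D}}$ by a normality-plus-Schwarz-lemma argument where the paper cites the explicit inequality of Lemma \ref{lemma:basic-estimatesII}; both work, and your identification of the compacts-into-compacts issue in the left-limit subcase (settled by the hyperbolic bound $|\omega(z)|\le(|z|+\rho)/(1+\rho|z|)$) matches the paper's treatment.
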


This paper is structured as follows.
In Section \ref{Section2}, we collect preliminary results that are used throughout the later discussions. 
In Section \ref{Section3}, we prove our main results: Theorem \ref{thm:right_parameter}, Theorem \ref{thm:continuity-wrt-s} and Theorem \ref{thm:cont_on_diagonal_set}.
In Section \ref{Section4} we will close this paper with some remarks on our results.

%We conclude Section \ref{Section4} and this paper with a brief consideration of the Loewner Range $\bigcup_{t \ge 0} f_{t}(\D)$.

\section{Preliminaries}
\label{Section2}

\begin{lemma}
\label{lemma:basic-estimatesI}
Let $\omega \in \B$. 
Then we have
\begin{equation}
\label{Lem2-1first}   
|\omega (z)| \leq  \,  \frac{|z| + |\omega(0)| }{1+| \omega(0)| |z|}
\end{equation}
and
\begin{equation}
\label{Lem2-1second}   
| \omega (0)| 
  \leq  \, \frac{|z|+|\omega(z)|}{1+|z||\omega(z)|} . 
\end{equation}
for all $z \in \mathbb{D}$.
\end{lemma}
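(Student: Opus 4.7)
The plan is to reduce both inequalities to a single application of the Schwarz lemma after a Möbius change of variables.

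For $a \in \mathbb{D}$, let $\varphi_a(w) := (w-a)/(1-\bar{a}w)$ denote the standard disk automorphism sending $a$ to $0$, and set $g := \varphi_{\omega(0)} \circ \omega$. Since $\omega \in \B$ and $|\omega(0)| < 1$, the function $g$ is a holomorphic self-map of $\mathbb{D}$ with $g(0) = 0$, so the classical Schwarz lemma yields $|g(z)| \leq |z|$ for all $z \in \mathbb{D}$. Writing $\rho(u,v) := |(u-v)/(1-\bar{u}v)|$ for the pseudo-hyperbolic distance, this is precisely the Schwarz--Pick bound
\[
\rho(\omega(z),\omega(0)) \leq |z| \qquad (z \in \mathbb{D}).
\]

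I would then invoke the triangle inequality for $\rho$, namely
\[
\rho(p,q) \leq \frac{\rho(p,r)+\rho(r,q)}{1+\rho(p,r)\rho(r,q)} \qquad (p,q,r \in \mathbb{D}),
\]
which follows from the usual triangle inequality for the hyperbolic metric $d_H = \tanh^{-1}\rho$ together with the addition formula for $\tanh$. Combined with the elementary monotonicity of $x \mapsto (x+c)/(1+cx)$ on $[0,1)$ for fixed $c \in [0,1)$, applying this triangle inequality once with $(p,r,q) = (\omega(z),\omega(0),0)$ gives
\[
|\omega(z)| \leq \frac{\rho(\omega(z),\omega(0)) + |\omega(0)|}{1+\rho(\omega(z),\omega(0))|\omega(0)|} \leq \frac{|z|+|\omega(0)|}{1+|z||\omega(0)|},
\]
which is \eqref{Lem2-1first}. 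Applying the same inequality with $(p,r,q) = (\omega(0),\omega(z),0)$ instead yields \eqref{Lem2-1second} in an identical manner.

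I do not anticipate any substantial obstacle; the lemma is essentially a textbook consequence of Schwarz--Pick. The only minor point of care is choosing the two different orientations of the pseudo-hyperbolic triangle inequality that recover the respective bounds, after which the computation is purely algebraic.
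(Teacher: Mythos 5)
Your proof is correct and rests on the same foundation as the paper's, namely the Schwarz--Pick inequality bounding the pseudo-hyperbolic distance from $\omega(z)$ to $\omega(z_0)$ by that from $z$ to $z_0$. The paper extracts the Euclidean estimate by rewriting the resulting pseudo-hyperbolic disk as an explicit Euclidean disk and bounding $|\omega(z)|$ by the modulus of its centre plus its radius, which simplifies to exactly the quantity $(\delta+|\omega(z_0)|)/(1+|\omega(z_0)|\delta)$ that your pseudo-hyperbolic triangle inequality produces, so the two derivations are interchangeable.
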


\begin{proof}
Let $z_{0} \in \D$.
By the Schwarz-Pick lemma we have
\begin{equation}
\left| \frac{\omega(z) - \omega(z_{0})}{1 - \overline{\omega(z_{0})} \omega(z)} \right|
\leq \delta :=
\left| \frac{z - z_{0}}{1 -  \overline{z_{0}} z} \right|,
\end{equation}
which is equivalent to 
\begin{equation}
\label{hyp-to-eucl-disk}
\left| \omega(z) - \frac{(1-\delta^{2})\omega(z_{0})}{1- \delta^{2} |\omega(z_{0})|^2} \right|
\leq \frac{(1-|\omega(z_{0})|^2)\delta}{1- \delta^{2}|\omega(z_{0})|^2}.
\end{equation} 
Then
\begin{equation}
|\omega(z)| \le \frac{(1-\delta^{2})\omega(z_{0})}{1- \delta^{2} |\omega(z_{0})|^2} + \frac{(1-|\omega(z_{0})|^2)\delta}{1- \delta^{2}|\omega(z_{0})|^2}
=
\frac{\delta + |\omega(z_{0})| }{1+|\omega(z_{0})|\delta}.
\end{equation}
From this \eqref{Lem2-1first} and \eqref{Lem2-1second} follow as $z_{0}=0$ and $z = 0$, respectively.
\end{proof}

%
%
%\
%
%\
%
%\
%
%
%
%Let $\alpha := \omega(0) \in \mathbb{D}$.
%By the Schwarz-Pick lemma we have
%\[
%   \left| \frac{\omega(z) - \alpha }{1- \overline{\alpha} \omega(z)} \right|
% \leq
%   | z| ,
%\]
%which is equivalent to 
%\begin{equation}
%\label{hyp-to-eucl-disk}
%   \left| \omega(z) - \frac{(1-|z|^2)\alpha}{1- |\alpha|^2 |z|^2} \right|
% \leq \frac{(1-|\alpha|^2)|z|}{1- |\alpha|^2 |z|^2} .
%\end{equation}
%From this the first inequality \eqref{Lem2-1first} follows as 
%\[
% |\omega(z)|
% \leq
% \frac{(1-|z|^2)|\alpha|}{1- |\alpha|^2 |z|^2}
% +
% \frac{(1-|\alpha|^2)|z|}{1- |\alpha|^2 |z|^2} 
% =
% \frac{|z| + |\alpha| }{1+| \alpha | |z|}.
%\]
%
%Next, denote $w_{0} := \omega(z_0)$ for a fixed $z_{0} \in \D$. 
%Then we have by the Schwarz-Pick inequality
%\[
% \left| \frac{\omega(z)-w_0}{1-\overline{w_0} \omega(z)} \right|
% \leq
% \left| \frac{z-z_0}{1-\overline{z_0} z}  \right| .
%\]
%Substituting $z=0$ we obtain
%\[
% \left| \frac{\omega(0)-w_0}{1-\overline{w_0} \omega(0)} \right|
% \leq
% \left| z_0 \right|,
%\]
%which is equivalent to
%\[
%   \left| \omega(0) - \frac{(1-|z_0|^2)w_0}{1-|z_0|^2|w_0|^2}\right|
% \leq
% \frac{(1-|w_0|^2)|z_0|}{1-|z_0|^2|w_0|^2}.
%\]
%From this the second inequality \eqref{Lem2-1second} follows as
%\[
% |\omega(0)|
% \leq
% \frac{(1-|z_0|^2)|w_0|}{1-|z_0|^2|w_0|^2}
% +
% \frac{(1-|w_0|^2)|z_0|}{1-|z_0|^2|w_0|^2}
% =
% \frac{|z_0| + |w_0| }{1+|w_0 | |z_0|}.
%\]

\begin{lemma}
\label{lemma:basic-estimatesII}
Let $\omega \in \B$ 
with $\omega(0)=0$ and $\omega'(0) =: \lambda \in \overline{\mathbb{D}}$.
Then we have
\begin{equation}
\label{ineq:growth-bound-for-omega}
\left| 
 \omega (z)
 \right|
 \leq  
 |z| \frac{|z|+ |\lambda| }{1+ |\lambda ||z|}
 \end{equation}
 and
 \begin{equation}
\label{ineq:difference-between-omega-and-identity}
\left| 
 \omega(z) - z
 \right|
 \leq 
 \frac{|z|(1+|z|)|1 - \lambda|}{1- |\lambda ||z|} .
\end{equation}
for all $z \in \D$.
\end{lemma}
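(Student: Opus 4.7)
The plan is to base both estimates on the auxiliary function $\psi(z) := \omega(z)/z$. Since $\omega(0)=0$, the classical Schwarz lemma shows that $\psi$ extends holomorphically across the origin and lies in $\B$, with $\psi(0) = \omega'(0) = \lambda$.

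The growth bound \eqref{ineq:growth-bound-for-omega} then follows immediately by applying the first inequality \eqref{Lem2-1first} of Lemma \ref{lemma:basic-estimatesI} to $\psi$, since $|\omega(z)| = |z|\,|\psi(z)|$ and $|\psi(0)|=|\lambda|$.

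For the stability estimate \eqref{ineq:difference-between-omega-and-identity}, the strategy is to parametrize $\psi$ through the Schwarz lemma applied to $\phi_\lambda \circ \psi$, where $\phi_\lambda(w) := (\lambda - w)/(1 - \bar\lambda w)$ is the M\"obius involution of $\D$ interchanging $0$ and $\lambda$. Since $(\phi_\lambda \circ \psi)(0)=0$, one gets $\phi_\lambda(\psi(z)) = v$ for some $v \in \overline{\D}$ with $|v| \le |z|$, which inverts to $\psi(z) = (\lambda - v)/(1 - \bar\lambda v)$. A direct computation then gives
\begin{equation}
\psi(z) - 1 \;=\; \frac{(\lambda - 1) + v(\bar\lambda - 1)}{1 - \bar\lambda v}.
\end{equation}
Using $|\bar\lambda - 1| = |1-\lambda|$, the triangle inequality, $|v| \le |z|$ and $|1 - \bar\lambda v| \ge 1 - |\lambda||z|$, one obtains $|\psi(z) - 1| \le (1+|z|)|1-\lambda|/(1 - |\lambda||z|)$; multiplying by $|z|$ yields \eqref{ineq:difference-between-omega-and-identity}.

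The step doing the real work is the explicit form of $\psi(z) - 1$ above. The key algebraic observation is that $\bar\lambda - 1 = \overline{\lambda - 1}$, which forces both terms of the numerator to share the common factor $|1 - \lambda|$; this is exactly what makes the right-hand side of \eqref{ineq:difference-between-omega-and-identity} vanish in the limit $\lambda \to 1$ (i.e. when $\omega$ is near the identity at the origin), and nothing deeper than this manipulation is needed.
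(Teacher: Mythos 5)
Your proof is correct. For the first inequality it is identical to the paper's: both apply \eqref{Lem2-1first} to the auxiliary function $\omega(z)/z$. For the second inequality the underlying fact is the same (the Schwarz--Pick lemma applied to $\psi=\omega/z$ at the origin), but the bookkeeping genuinely differs: the paper uses the Euclidean-disk form \eqref{hyp-to-eucl-disk}, bounding $|\psi(z)-1|$ by the distance from $1$ to the disk's center plus its radius, and then needs the extra estimate $1-|\lambda|^2\le|1-\lambda|(1+|\lambda|)$ to recombine the two terms; you instead write $\psi(z)=\phi_\lambda(v)$ with $|v|\le|z|$ via the M\"obius involution and compute $\psi(z)-1=\bigl((\lambda-1)+v(\bar\lambda-1)\bigr)/(1-\bar\lambda v)$ in closed form, so the common factor $|1-\lambda|$ appears at once and the triangle inequality finishes in one step. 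Your version is a little cleaner and avoids the (harmless) typo in the paper where $|1-\lambda|$ momentarily becomes $1-|\lambda|$. One pedantic point: the hypothesis allows $\lambda\in\overline{\mathbb{D}}$, and $\phi_\lambda$ is only defined for $\lambda\in\mathbb{D}$; when $|\lambda|=1$ the Schwarz lemma forces $\omega(z)=\lambda z$ and both inequalities are immediate, so you should dispose of that case in a sentence (the paper's proof has the same implicit gap, since $\omega(z)/z$ need not map into $\mathbb{D}$ in that degenerate case).
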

\begin{proof}
The first inequality \eqref{ineq:growth-bound-for-omega} immediately follows from \eqref{Lem2-1first} in Lemma \ref{lemma:basic-estimatesI} because the function $g(z) := \omega(z)/z$ belongs to $\B$.

Next, by \eqref{hyp-to-eucl-disk} $g \in \B$ satisfies
$$
\left| g(z) - \frac{(1-|z|^{2})\lambda}{1-|\lambda|^{2}|z|^{2}}\right| 
	\le \frac{(1-|\lambda|^{2})|z|}{1-|\lambda|^{2}|z|^{2}}.
$$
Then we have 
\begin{align}
|g(z)-1| 
	& \le \left| g(z) - \frac{(1-|z|^{2})\lambda}{1-|\lambda|^{2}|z|^{2}}\right| + \left|\frac{(1-|z|^{2})\lambda}{1-|\lambda|^{2}|z|^{2}} - 1\right|\\[6pt]
	& \le \frac{(1-|\lambda|^{2})|z|}{1-|\lambda|^{2}|z|^{2}} + \frac{(1-|\lambda|)(1+|\lambda||z|^{2})}{1-|\lambda|^{2}|z|^{2}}\\[6pt]
	& \le \frac{|1-\lambda|(1+|z|)}{1-|\lambda||z|}
\end{align}
which shows \eqref{ineq:difference-between-omega-and-identity}.
\end{proof}

\begin{lemma}[Landau]
\label{lemma:Landau}
Let $\omega \in \B$ 
with $ \omega(0)=0$ and $\omega'(0)= \sigma \in (0,1)$.
Then 
$\omega$ is univalent in $\mathbb{D}(\rho)$
with $\rho = \rho( \sigma ) := \sigma/(1+ \sqrt{1-\sigma^2})$, where $\mathbb{D}(r) := \{z \in \C : |z| < r\}$.
\end{lemma}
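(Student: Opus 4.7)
The plan is to prove that $\operatorname{Re}\omega'(z) > 0$ holds for every $z \in \D(\rho)$. Since $\D(\rho)$ is convex, this will imply univalence by the Noshiro--Warschawski criterion; equivalently, for $z_1 \neq z_2$ in $\D(\rho)$ the identity $\omega(z_1) - \omega(z_2) = (z_1 - z_2) \int_0^1 \omega'(z_2 + t(z_1-z_2))\, dt$ forces $\omega(z_1) \neq \omega(z_2)$, since the integrand has strictly positive real part.

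To estimate $\omega'$, introduce $h(z) := \omega(z)/z$, which extends holomorphically to $\D$ with $h \in \B$ and $h(0) = \omega'(0) = \sigma$. The Schwarz--Pick step used in the proof of Lemma \ref{lemma:basic-estimatesI} (inequality \eqref{hyp-to-eucl-disk}) shows that, for $|z| = r$, the value $h(z)$ lies in the closed disk with centre $c(r) := \sigma(1-r^2)/(1-\sigma^2 r^2)$ and radius $R(r) := (1-\sigma^2)r/(1-\sigma^2 r^2)$. Using the factorisation $(1-\sigma r)^2 - (\sigma-r)^2 = (1-\sigma^2)(1-r^2)$ one obtains the clean forms $c(r) - R(r) = (\sigma - r)/(1 - \sigma r)$ and $1 - (c(r)-R(r))^2 = (1-\sigma^2)(1-r^2)/(1-\sigma r)^2$. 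Provided $r < \sigma$, so that $c(r) > R(r) \geq 0$, we deduce $\operatorname{Re} h(z) \geq (\sigma - r)/(1 - \sigma r)$ and $1 - |h(z)|^2 \leq (1-\sigma^2)(1-r^2)/(1 - \sigma r)^2$; the Schwarz--Pick derivative bound $|h'(z)| \leq (1-|h(z)|^2)/(1-|z|^2)$ then yields $|h'(z)| \leq (1-\sigma^2)/(1-\sigma r)^2$.

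Combining these via $\omega'(z) = h(z) + z h'(z)$ gives
$\operatorname{Re}\omega'(z) \geq (\sigma - r)/(1 - \sigma r) - r(1-\sigma^2)/(1 - \sigma r)^2 = [\sigma(1+r^2) - 2r]/(1 - \sigma r)^2$.
The numerator is positive precisely when the quadratic $\sigma r^2 - 2r + \sigma$ in $r$ is positive, whose smaller root is $r = (1 - \sqrt{1-\sigma^2})/\sigma = \sigma/(1 + \sqrt{1-\sigma^2}) = \rho$. Since $\rho < \sigma$, the auxiliary condition $r < \sigma$ invoked above is automatic for $r < \rho$, so $\operatorname{Re}\omega'(z) > 0$ throughout $\D(\rho)$ and the argument closes.

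The main obstacle is calibrating the Schwarz--Pick estimates sharply enough that the lower bound for $\operatorname{Re}\omega'$ collapses precisely to the numerator $\sigma(1+r^2) - 2r$, whose vanishing yields the stated $\rho(\sigma)$; the key algebraic identity driving this collapse is $(1-\sigma r)^2 - (\sigma-r)^2 = (1-\sigma^2)(1-r^2)$. Cruder coefficient estimates, for example applying $|a_n| \leq 1$ termwise in $\omega(z) = \sigma z + \sum_{n \geq 2} a_n z^n$, would only give a strictly smaller radius than $\rho$.
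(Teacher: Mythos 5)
Your proof is correct. Note that the paper does not prove this lemma at all: it is stated as Landau's theorem and the reader is referred to Tsuji's book, so any self-contained argument is necessarily a different route. Your route runs through the Noshiro--Warschawski criterion: you show $\operatorname{Re}\omega'>0$ on the convex disk $\D(\rho)$ by combining the Schwarz--Pick disk location of $h(z)=\omega(z)/z$ (the same inequality \eqref{hyp-to-eucl-disk} the paper uses in Lemma \ref{lemma:basic-estimatesI}) with the Schwarz--Pick derivative bound $|h'(z)|\le (1-|h(z)|^2)/(1-|z|^2)$, and the algebra checks out: $c(r)-R(r)=(\sigma-r)/(1-\sigma r)$, $1-(c(r)-R(r))^2=(1-\sigma^2)(1-r^2)/(1-\sigma r)^2$, and the resulting lower bound $\operatorname{Re}\omega'(z)\ge \bigl(\sigma(1+r^2)-2r\bigr)/(1-\sigma r)^2$ is positive exactly for $r<\rho(\sigma)$, with the side condition $r<\sigma$ automatic since $\rho<\sigma$. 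The classical proofs (as in Tsuji) instead estimate the difference quotient $\bigl(\omega(z_1)-\omega(z_2)\bigr)/(z_1-z_2)$ directly from the same Schwarz--Pick data; what your version buys is a short, transparent derivation that still lands on the sharp Landau radius rather than a smaller one, at the mild cost of invoking the convexity-plus-positive-derivative univalence criterion. The only cosmetic caveat is the degenerate case $h\equiv\sigma$, where the derivative bound is vacuous and univalence is trivial anyway.
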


\noindent
For a proof, see for example \cite[Theorem VI.10 in pp.259--261]{Tsuji:1975}.
We notice that $\lim_{\sigma \nearrow 1} \rho( \sigma) = 1$.

Below when simply saying that an evolution family is \textit{continuous}, it means that it is either of jointly continuous or continuous with respect to the left/rightleft parameter.

Let $\{ \omega_{s,t} \}_{(s,t) \in I_+^2}$ be an evolution family and 
$J$ be a subinterval of $I$.
Then it is clear that
the restriction $\{ \omega_{s,t} \}_{(s,t) \in J_+^2}$
is continuous whenever $\{ \omega_{s,t} \}_{(s,t) \in I_+^2}$
is continuous.
It is a simple exercise that the converse also holds as follows (see also Figure \ref{figure 1} below).

\begin{lemma}
\label{Lemma2-4}
Let $a < b < c$.
If
evolution families
$\{ \phi_{s,t} \}_{(s,t) \in [a,b]_+^2}$
and $\{ \psi_{s,t} \}_{(s,t) \in [b,c]_+^2}$ 
are continuous,
then the extended family 
$\{ \omega_{s,t} \}_{(s,t) \in [a,c]_+^2}$
defined by 
\[
  \omega_{s,t} 
  = 
  \begin{cases}
  \phi_{s,t}, \quad & a \leq s \leq t \leq b,  \\
   \psi_{s,t}, \quad & b \leq s \leq t \leq c,  \\
  \psi_{c,t} \circ \phi_{s,c}, \quad & a \leq s \leq b \leq t \leq c   
  \end{cases}
\]
is also continuous.
\end{lemma}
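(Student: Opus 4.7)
The plan is to verify each of the three senses of \emph{continuous} (jointly, with respect to the right parameter, with respect to the left parameter) separately. In each case I would partition $[a,c]_+^2$ into the three closed regions
\begin{equation}
R_1 := \{(s,t) : a \le s \le t \le b\}, \quad R_2 := \{(s,t) : b \le s \le t \le c\}, \quad R_3 := \{(s,t) : a \le s \le b \le t \le c\},
\end{equation}
on which $\omega_{s,t}$ equals $\phi_{s,t}$, $\psi_{s,t}$, and $\psi_{b,t} \circ \phi_{s,b}$ respectively. Continuity of $\omega$ in the interior of each region is immediate: on $R_1$ and $R_2$ it is the hypothesis on $\phi$ and $\psi$, while on $R_3$ it follows from the fact that the composition map $\A \times \B \ni (f,g) \mapsto f \circ g \in \A$ is jointly continuous and each of the factors $t \mapsto \psi_{b,t}$ and $s \mapsto \phi_{s,b}$ is continuous by hypothesis. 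So the real work is checking consistency and continuity across the two seams $R_1 \cap R_3 = \{(s,b) : s \in [a,b]\}$ and $R_2 \cap R_3 = \{(b,t) : t \in [b,c]\}$.

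For joint continuity, at a seam point $(s_0,b)$ the $R_1$-formula yields $\phi_{s_0,b}$, while the $R_3$-formula yields $\psi_{b,b} \circ \phi_{s_0,b} = \id_\D \circ \phi_{s_0,b} = \phi_{s_0,b}$ by (EF2); symmetrically the $R_2$- and $R_3$-formulas agree at $(b,t_0)$ because $\phi_{b,b} = \id_\D$, and the corner $(b,b)$ gives $\id_\D$ from all three. Once these agreements are in hand, given any sequence $(s_n,t_n) \to (s_0,t_0)$ in $[a,c]_+^2$ one may split it into subsequences lying in each of the three regions, apply continuity within that region, and conclude that every subsequential limit in $\A$ coincides with $\omega_{s_0,t_0}$.

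The one-parameter cases reduce to the same argument on a line. For the right parameter, $t \mapsto \omega_{a,t}$ equals $\phi_{a,t}$ on $[a,b]$ and $\psi_{b,t} \circ \phi_{a,b}$ on $[b,c]$; both pieces are continuous (the second because composing the continuous map $t \mapsto \psi_{b,t}$ with the fixed element $\phi_{a,b} \in \B$ preserves continuity) and agree at $t=b$. The left-parameter case is the mirror statement for $s \mapsto \omega_{s,c}$, which equals $\psi_{s,c}$ on $[b,c]$ and $\psi_{b,c} \circ \phi_{s,b}$ on $[a,b]$. I do not anticipate any serious obstacle: the lemma is a routine verification, and the only substantive ingredient is the identity property (EF2) at the gluing point $b$, which is precisely what makes the $R_3$-formula consistent with $\phi$ on $\{t=b\}$ and with $\psi$ on $\{s=b\}$.
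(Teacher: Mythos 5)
Your proof is correct. The paper offers no argument for this lemma at all---it is dismissed as ``a simple exercise'' with only a figure as illustration---so your three-region decomposition, the seam-consistency check via (EF2), and the appeal to the paper's own observation that $\{f_\lambda\circ g_\lambda\}$ is continuous when $\{g_\lambda\}\subset\B$ is precisely the verification the authors intend, carried out for each of the three senses of ``continuous.'' One remark: the third clause of the lemma as printed reads $\psi_{c,t}\circ\phi_{s,c}$, which is ill-typed since $\phi$ is only defined on $[a,b]_+^2$; you have silently and correctly read it as $\psi_{b,t}\circ\phi_{s,b}$, the only formula compatible with the semigroup property, and your consistency computations at $t=b$ and $s=b$ confirm that this is the intended definition.
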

\begin{figure}[h]
\includegraphics[width=370pt]{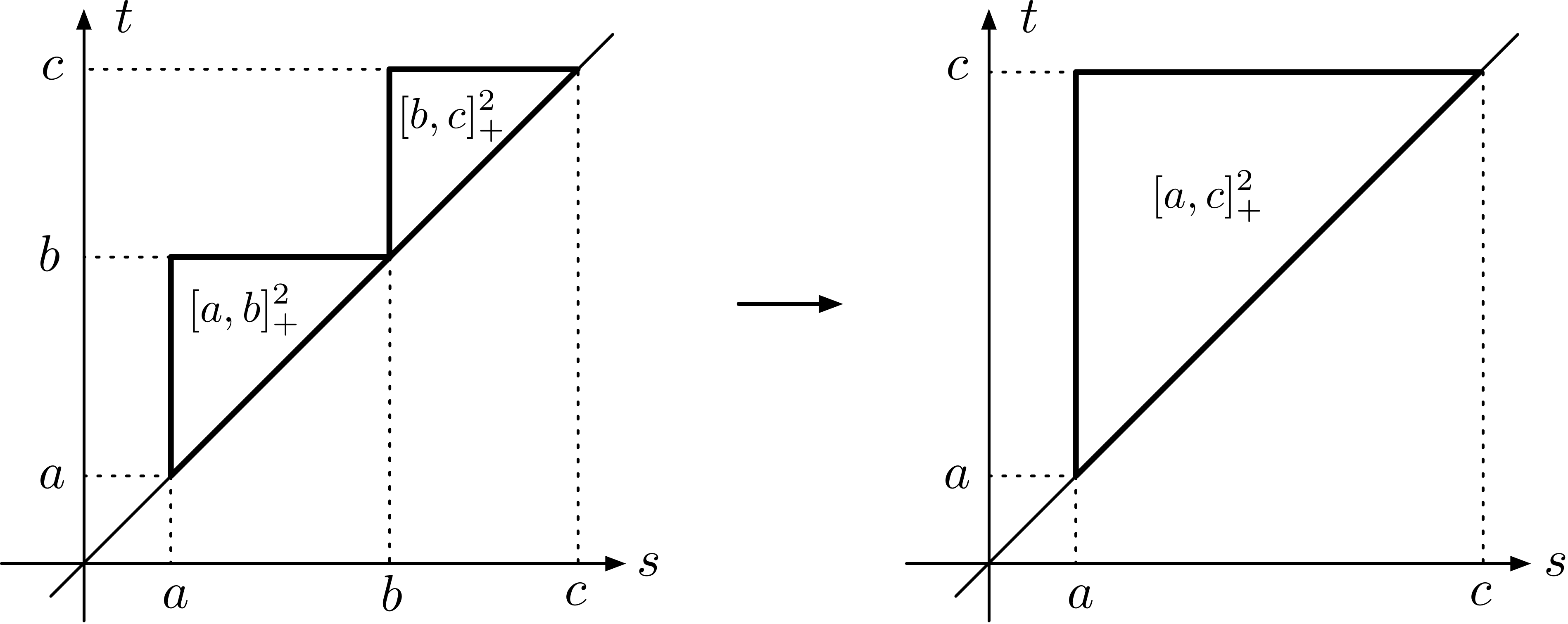}\\[8pt]
\caption{Two continuous evolution families $\{\phi_{s,t}\}$ on $[a,b]_{+}^{2}$ and $\{\psi_{s,t}\}$ on $[b,c]_{+}^{2}$ extend $\{\omega_{s,t}\}$ to be continuous on $[a,c]_{+}^{2}$.}
\label{figure 1}
\end{figure}

%An evolution family $\{ \omega_{s,t} \}_{(s,t) \in I_+^2}$
%is said to fix the origin 
%if $\omega_{s,t}(0) = 0$ for all $(s,t) \in I_+^2$.
\begin{lemma}
\label{lemma:continuity-of-evo-fam-fixing-0}
Let $\{ \omega_{s,t} \}_{(s,t) \in I_+^2}$ with $I = [a,b]$
be an evolution family with $\omega_{s,t}(0) = 0$ for all $(s,t) \in I_+^2$.
Then, 
$\{ \omega_{s,t} \}_{(s,t) \in I_+^2}$ is jointly continuous 
if and only if
$\omega_{a,t}'(0)$ is a continuous function of  $t \in [a,b]$.
In this case 
$\omega_{a,t}'(0) \not= 0$ for all $t \in I$
and $\omega_{s,t}$ is univalent on $\mathbb{D}$
for each $(s,t) \in I_+^2$.
\end{lemma}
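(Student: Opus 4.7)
The forward implication is immediate: if $\{\omega_{s,t}\}$ is jointly continuous, then $t \mapsto \omega_{a,t}$ is continuous into $\A$, and the derivative evaluation $f \mapsto f'(0)$ is continuous on $\A$. For the converse, set $\sigma(t) := \omega_{a,t}'(0)$ and note that (EF2)--(EF3) together with $\omega_{s,t}(0)=0$ yield
\begin{equation}
\sigma(t) = \omega_{s,t}'(0)\,\sigma(s), \qquad a \le s \le t \le b,
\end{equation}
so by Schwarz $|\sigma|$ is non-increasing with $\sigma(a)=1$.

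The first substantive step is to show that $\sigma$ never vanishes on $[a,b]$. Arguing by contradiction, let $c \in (a,b]$ be the smallest parameter with $\sigma(c)=0$, so that $\sigma(s) \neq 0$ for $s \in [a,c)$. By (EF1), $\omega_{a,c}$ is non-constant, so its Taylor series at $0$ has a smallest nonzero coefficient $a_k$ with $k \ge 2$. Comparing coefficients in $\omega_{a,c} = \omega_{s,c} \circ \omega_{a,s}$ for $s \in [a,c)$ forces $\omega_{s,c}$ to vanish to exactly order $k$ at $0$ with leading coefficient $b_k = a_k/\sigma(s)^k$; iterated Schwarz (i.e.\ $\omega_{s,c}(z)/z^k$ belongs to $\B$) gives $|b_k| \le 1$, whence $|a_k| \le |\sigma(s)|^k$. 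Letting $s \nearrow c$ and using continuity of $\sigma$ forces $a_k=0$, contradicting the choice of $k$. Consequently $\sigma(t) \neq 0$ for all $t$, and $\omega_{s,t}'(0) = \sigma(t)/\sigma(s) \neq 0$ for all $(s,t) \in I_+^2$.

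Next I promote continuity of $\sigma$ to continuity of $t \mapsto \omega_{a,t}$ in $\A$. The key estimate is inequality \eqref{ineq:difference-between-omega-and-identity} of Lemma~\ref{lemma:basic-estimatesII}: for $t \ge t_0$, the identity $\omega_{a,t}(z) - \omega_{a,t_0}(z) = \omega_{t_0,t}(\omega_{a,t_0}(z)) - \omega_{a,t_0}(z)$ combined with \eqref{ineq:difference-between-omega-and-identity} applied to $\omega_{t_0,t}$ (whose derivative at $0$ is $\sigma(t)/\sigma(t_0) \to 1$) gives locally uniform convergence; for $t < t_0$, the symmetric identity $\omega_{a,t_0}(z) = \omega_{t,t_0}(\omega_{a,t}(z))$ together with Schwarz (which confines $\omega_{a,t}(z)$ to a fixed $\overline{\D(r)}$) produces the same conclusion. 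For full joint continuity, take $(s_n,t_n) \to (s_0,t_0)$; by Montel extract a subsequential limit $\omega_{s_{n_j},t_{n_j}} \to \tilde\omega \in \B$, and pass to the limit in $\omega_{a,t_{n_j}} = \omega_{s_{n_j},t_{n_j}} \circ \omega_{a,s_{n_j}}$ (the composition being continuous in the locally uniform topology) to obtain $\omega_{s_0,t_0} \circ \omega_{a,s_0} = \tilde\omega \circ \omega_{a,s_0}$. Since $\omega_{a,s_0}$ has nonzero derivative at $0$, its image is open and the identity theorem gives $\tilde\omega = \omega_{s_0,t_0}$; hence every subsequential limit agrees and the whole sequence converges.

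Finally, for the univalence of $\omega_{s,t}$ on $\D$, fix $r \in (0,1)$. Lemma~\ref{lemma:Landau} (extended to complex derivatives by rotation) supplies a threshold $\sigma_0=\sigma_0(r)<1$ so that any $\omega \in \B$ with $\omega(0)=0$ and $|\omega'(0)| > \sigma_0$ is univalent on $\D(r)$. Using uniform continuity and non-vanishing of $\sigma$ on $[s,t]$, I choose a partition $s=t_0<\cdots<t_n=t$ fine enough that $|\sigma(t_{i+1})/\sigma(t_i)| > \sigma_0$ for every $i$; each $\omega_{t_i,t_{i+1}}$ is then univalent on $\D(r)$ and, by Schwarz, maps $\D(r)$ into itself, so the composition $\omega_{s,t} = \omega_{t_{n-1},t_n} \circ \cdots \circ \omega_{t_0,t_1}$ is univalent on $\D(r)$. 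Letting $r \nearrow 1$ gives univalence on $\D$. I expect the main obstacle to be the non-vanishing step for $\sigma$, where continuity of the first Taylor coefficient alone must be leveraged, via iterated Schwarz and (EF1), to rule out a sudden flattening of $\omega_{a,c}$ at the origin.
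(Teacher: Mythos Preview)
Your proof is correct, and two of its three substantive steps are handled differently from the paper.

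For the non-vanishing of $\sigma$, the paper argues by choosing a sequence $s_0<s_1<\cdots\nearrow t_0$ with $|\omega_{s_{n-1},s_n}'(0)|\le\rho$ and iterating the growth bound \eqref{ineq:growth-bound-for-omega} to force $|\omega_{a,t_0}(z)|\le\bigl(\frac{r+\rho}{1+\rho r}\bigr)^n|z|\to 0$. Your argument via the order of vanishing---reading off the leading Taylor coefficient of $\omega_{s,c}$ from $\omega_{a,c}=\omega_{s,c}\circ\omega_{a,s}$ and applying iterated Schwarz to bound it---is shorter and more conceptual; it uses only continuity of $\sigma$ at the single point $c$ and avoids the explicit iteration.

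For joint continuity, the paper gives a direct quantitative estimate of $|\omega_{s,t}(z)-\omega_{s_0,t_0}(z)|$ in terms of $\alpha$, treating the diagonal and off-diagonal cases separately via \eqref{ineq:difference-between-omega-and-identity} and the Schwarz--Pick Lipschitz bound. Your route---first establishing continuity of $t\mapsto\omega_{a,t}$, then invoking Montel and the identity theorem to identify subsequential limits through the equation $\omega_{a,t_0}=\tilde\omega\circ\omega_{a,s_0}$---is softer but equally valid; it trades explicit bounds for normality, and the openness of $\omega_{a,s_0}(\mathbb D)$ (guaranteed already by non-constancy) makes the identification step clean.

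The univalence argument via Landau's lemma and a fine partition is essentially the same in both proofs.
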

\begin{proof}
The necessity is clear.
To show sufficiency let 
$\alpha (t) := \omega_{a,t}'(0)$ for all $t \in I$ and assume 
$\alpha (t)$ is continuous there.
Then by $\omega_{a,t}(z) = \omega_{s,t} ( \omega_{a,s}(z))$,
we have
\begin{equation}
\label{eq:omega_s,t'0}
   \alpha(t) = \omega_{s,t}'(0) \alpha (s) , \quad (s,t) \in I_+^2.
\end{equation}
By the Schwarz lemma,  
$|\alpha (t)|$ is non-increasing in $t$.

We show 
$\alpha (t) \not= 0$ for all $t \in I$.
Suppose, on the contrary, that  
$\alpha (t) = 0$ for some $t \in I$.
Then since
$\alpha (a) = \omega_{a,a}'(0) = 1$,
there exists $t_0 \in (a,b]$ such that 
\begin{align}
& \alpha (t) \not= 0 \quad \text{ for } t \in [a,t_0), \\
& \alpha (t)  = 0 \quad \text{ for }  t \in [t_0,b] .
\end{align}
Take 
$r , \rho \in (0,1)$ arbitrarily
and put $s_0:=a$.
Inductively take $s_n \in (s_{n-1},t_0)$ such that 
$|\omega_{s_{n-1},s_n}'(0)|  = 
\frac{|\alpha (s_n)|}{|\alpha (s_{n-1})|} \leq \rho $ for all $n \in \N$.
Then 
$a=s_0 < s_1< \cdots < s_n < \cdots < t_0$.
For $|z| \leq r $ we have by the Schwarz lemma and  
Lemma \ref{lemma:basic-estimatesII}
\begin{align}
   |\omega_{a,t_0}(z)|
  = & \, |\omega_{s_n,t_0}(\omega_{s_0,s_n}(z))| 
\\
  \leq & \, |\omega_{s_0,s_n}(z)| 
\\
  = & \, |\omega_{s_{n-1},s_n}(\omega_{s_0,s_{n-1}}(z))|  
\\
  \leq & \, |\omega_{s_0,s_{n-1}}(z)| 
 \frac{|\omega_{s_0,s_{n-1}}(z)|+|\omega_{s_0,s_{n-1}}'(0)|}
 {1+ |\omega_{s_0,s_{n-1}}'(0)| |\omega_{s_0,s_{n-1}}(z)| }  
\\
  \leq & \, |\omega_{s_0,s_{n-1}}(z)| 
 \frac{|\omega_{s_0,s_{n-1}}(z)|+\rho}{1+ \rho |\omega_{s_0,s_{n-1}}(z)| }  
\\
  \leq & \, \frac{r+\rho}{1+ \rho r } |\omega_{s_0,s_{n-1}}(z)| 
\\
    \vdots & \, 
\\
 \leq & \, \left( \frac{r+\rho}{1+ \rho r }\right)^n |\omega_{s_0,s_0}(z)|
 = \left( \frac{r+\rho}{1+ \rho r }\right)^n |z| .
\end{align}
Letting 
$n \rightarrow \infty$ 
we have $\omega_{a,t_0}(z) = 0$ for $|z| \leq r$.
Hence by the identity theorem for analytic functions
we obtain $\omega_{a,t_0} = 0$, which contradicts 
the condition (EF1) in Definition \ref{def:evolutiuon-family}.

We show $\{ \omega_{s,t}\}_{(s,t) \in I_+^2}$ is jointly continuous at 
any $(s_0,t_0) \in I_+^2$.
First we consider the case that $s_0=t_0$.
By \eqref{eq:omega_s,t'0} and Lemma \ref{lemma:basic-estimatesII},
we have for $(s,t) \in I_+^2$,
$$
   |\omega_{s,t}(z)-\omega_{t_0,t_0}(z)|
 =
   |\omega_{s,t}(z)-z| 
 \leq
  \left| 1 - \frac{\alpha (t)}{\alpha(s)} \right| \frac{|z|(1+|z|)}{1-|z|} . 
$$
Therefore $\omega_{s,t}(z) \rightarrow \omega_{t_0,t_0}(z)=z$
locally uniformly on $\mathbb{D}$
as $I_+^2 \ni (s,t) \rightarrow (t_0,t_0)$.

Next we consider the case that $s_0 < t_0$.
Here we introduce notations which is used throughout the article; 
$$
x \vee y := \max \{ x , y \}
\quad \textup{and}\quad 
x \wedge y := \min \{ x , y \}
\quad \text{for }  x, y \in \mathbb{R}.
$$
We also use the inequality 
\[
  |\omega(z_1) - \omega (z_0)| 
 \leq \frac{2|z_1-z_0|}{1-r^2}
 \quad \text{ for } \omega \in \B
 \text{ and } |z_0|,|z_1| \leq r,
\]
which is a simple consequence of the Schwarz-Pick inequality.
Then we have 
for $(s,t) \in I_+^2$ with $s<t_0$ and $s_0<t$
and $|z| \leq r$,
\begin{align*}
|\omega_{s,t}(z) &- \omega_{s_0,t_0} (z)|
\\
 \leq & \,
 |\omega_{s,t}(z) - \omega_{s_0,t} (z)|
+ |\omega_{s_0,t}(z) - \omega_{s_0,t_0} (z)|
\\
 = & \,
 |\omega_{s\wedge s_0,t}(z) - \omega_{s \vee s_0,t} (z)|
+ |\omega_{s_0,t\vee t_0}(z) - \omega_{s_0,t \wedge t_0} (z)|
\\
 = &\,
 |\omega_{s \vee s_0, t}(\omega_{s\wedge s_0,s \vee s_0}(z)) - \omega_{s \vee s_0,t} (z)|
+
 |\omega_{t \wedge t_0,t \vee t_0} (\omega_{s_0,t \wedge t_0}(z)) 
 - \omega_{s_0,t \wedge t_0} (z)|
\\[3pt]
  \leq &\,
  \frac{2|\omega_{s\wedge s_0,s \vee s_0}(z) - z|}{1-r^2}
 +
 \left| 1 - \frac{\alpha (t \vee t_0)}{\alpha (t \wedge t_0)} \right|
 \frac{|\omega_{s_0,t \wedge t_0}(z)|(1+|\omega_{s_0,t \wedge t_0}(z)|)}
  {1-|\omega_{s_0,t \wedge t_0}(z)|}
\\[3pt]
 \leq &\,
  \left| 1 - \frac{\alpha (s \vee s_0) }{ \alpha (s \wedge s_0 ) } \right|
  \frac{|z|(1+|z|)}{(1-r^2)(1-|z|)}
 +
 \left| 1 - \frac{\alpha (t \vee t_0)}{\alpha (t \wedge t_0)} \right|
 \frac{|z|(1+|z|)}{(1-|z|)} .
\end{align*}
This implies 
$\omega_{s,t} \rightarrow \omega_{s_0,t_0}$ locally uniformly on $\mathbb{D}$
as $I_+^2 \ni (s,t) \rightarrow (s_0,t_0)$.

Now we show that 
each $\omega_{s_0,t_0}$ is univalent by slightly generalizing 
the argument in \cite{Pom:1965}.
We may assume $s_0< t_0$, since otherwise the univalence is trivial. 
For any $r \in (0,1)$ take $\sigma \in (0,1)$ with 
$\sigma/(1+ \sqrt{1-\sigma^2})  > r$.
Since $\alpha (t)$ is continuous 
and $\alpha (t) \not= 0$ on $[a,b]$,
there exists a division
$s_0 < s_1< \cdots < s_n = t_0$ such that
$\frac{|\alpha (s_k)|}{|\alpha(s_{k-1})|} > \sigma$, $k=1, \ldots , n$.
Then $\omega_{s_{k-1},s_k}$ is univalent in $\D(r)$ by Lemma \ref{lemma:Landau}.
From this and 
$\omega_{s_{k-1},s_k} (\D(r)) \subset \D(r)$
it follows that the composition mapping 
$\omega_{s_0,t_0} = \omega_{s_{n-1},s_n} \circ \cdots \circ \omega_{s_0,s_1} $
is also univalent in $\D(r)$.
Since $r \in (0,1)$ is arbitrary, $\omega_{s_0,t_0}$
is univalent on $\mathbb{D}$.
\end{proof}

\section{Proof of Main Theorems}
\label{Section3}
In this section we will give proofs of the main theorems.
Note that before showing Theorem \ref{thm:continuity-wrt-s} we verify Theorem \ref{thm:cont_on_diagonal_set}, because the equivalence (i)$\Leftrightarrow$(viii) is needed to show Theorem \ref{thm:continuity-wrt-s}.

For 
$\lambda  \in \mathbb{D}$ let
\begin{equation}
   \sigma_{\lambda } (z) := \frac{z+ \lambda }{1 + \overline{\lambda }z}, 
    \quad z \in \mathbb{D} .
\end{equation}
Then $\sigma_{\lambda }$ is an automorphism of $\D$ and we have $\sigma_{\lambda }^{-1} = \sigma_{-\lambda}$

\subsection{Proof of Theorem \ref{thm:right_parameter}}
The implications
(i)$\Rightarrow$(ii)$\Rightarrow$(iii)$\Rightarrow$(iv)
are clear.

Assuming (iv) we show (i) and that 
$\omega_{s,t}$ is univalent on $\mathbb{D}$ 
for each $(s,t) \in I_+^2$.
Let 
\begin{equation}
 c(t) := \omega_{a,t}(z_0), \quad  t \in [a,b].
\end{equation}
Then 
$c(t)$ is a continuous function of $t \in [a,b]$
with $c(a) = \omega_{a,a}(z_0) = z_0$ and satisfies 
\begin{equation}
  \omega_{s,t} (c(s)) 
   = \omega_{s,t} ( \omega_{a,s}(z_0))
   =  \omega_{a,t} (z_0) = c(t) , \quad (s,t) \in I_+^2.
\end{equation}
Set 
\begin{equation}
  \tilde{\omega}_{s,t} 
   := \sigma_{c(t)}^{-1} \circ \omega_{s,t} \circ \sigma_{c(s)} , 
   \quad (s,t) \in I_+^2 .
\end{equation}
Then it is easy to see that 
$\tilde{\omega}_{s,t} (0) = 0$ for $(s,t) \in I_+^2$ and that  
$\tilde{\omega}_{t_1,t_2} \circ \tilde{\omega}_{t_0,t_1} = \tilde{\omega}_{t_0,t_2}$
for $a \leq t_0 \leq t_1 \leq t_2 \leq b$, i.e.,
$\{ \tilde{\omega}_{s,t} \}_{(s,t) \in I_+^2} $ is an evolution family 
fixing the origin.
We also have
\[
   \tilde{\omega}_{s,t}'(0)
   = \sigma_{-c(t)}'(c(t)) \cdot \omega_{s,t}'(c(s)) \cdot \sigma_{c(s)}'(0)
   = \frac{1-|c(s)|^2}{1-|c(t)|^2} \omega_{s,t}'(c(s)) .
\]
Particularly 
\begin{equation}
  \tilde{\omega}_{a,t}'(0)
   = \frac{1-|c(a)|^2}{1-|c(t)|^2} \omega_{a,t}'(c(a))
   = \frac{1-|c(a)|^2}{1-|c(t)|^2} \omega_{a,t}'(z_0) 
\end{equation}
and this implies that $\tilde{\omega}_{a,t}'(0)$ is a continuous 
function of  $t \in  [a,b]$.
Hence by Lemma \ref{lemma:continuity-of-evo-fam-fixing-0}
$\{ \tilde{\omega}_{s,t} \}_{(s,t) \in I_+^2} $ is jointly continuous.
By the continuity 
of $\sigma_{c(t)}$ and $\sigma_{c(s)}^{-1}$,
$\{ \omega_{s,t} \}_{(s,t) \in I_+^2}$ is also jointly continuous.
Similarly by Lemma \ref{lemma:continuity-of-evo-fam-fixing-0}
each $\tilde{\omega}_{s,t} $ is univalent on $\mathbb{D}$
and hence by the injectivity of $\sigma_{c(t)}$ and $\sigma_{c(s)}^{-1}$,
$\omega_{s,t} $ is also univalent on $\mathbb{D}$.
\qed

\subsection{Proof of Theorem \ref{thm:cont_on_diagonal_set}} 
The implication
(i)$\Rightarrow$(vii) is clear.

To show (vii)$\Rightarrow$(viii)
it suffices to see that 
$\{\omega_{s,t} \}_{(s,t) \in I_+^2} $ is hyperbolically bounded.
Take $\varepsilon  \in (0,1)$ arbitrarily. 
Then by (vii), for any $c \in [a,b]$ there exists $\delta_{c} > 0$ such that
\[
  |\omega_{s,t}(0)| = | \omega_{s,t}(0) - \omega_{c,c}(0)| \leq \varepsilon
\]
for all $a \vee (c - \delta_{c}) \leq s \leq t \leq (c + \delta_{c}) \wedge b$.
Consider the open covering 
$$
I \subset \bigcup_{c \in I} (c-\delta_{c},\, c+ \delta_{c}).
$$
Since $I$ is compact, one can choose finite number of open intervals to cover $I$;
$$
I \subset \bigcup_{k=0}^{n} I_{k},\quad \text{where }\,\, I_{k} := (a_{k}, b_{k}) \,\, \text{ and }\,\, a_{k} := c_{k}-\delta_{c_{k}},\,b_{k} := c_{k} + \delta_{c_{k}},\,c_{0} =a,\,c_{n} =b.
$$
By induction, $\{I_{k}\}$ can be a subfamily of intervals such that no interval $I_{k}$ is contained in the union of the others.
With loss of generality we may assume $a_{0}  < a_{1} < \cdots < a_{n-1} < a_{n}$.
Then by a simple consideration, the following inequalities are deduced (see Figure 1);
$$
\begin{cases}
	a_{k+1} < b_{k} \quad \text{ for } k=0, 1, \ldots , n-1,\\
	b_{k} < b_{k+1} \quad \text{ for } k=0, 1, \ldots , n-1,\\
	b_{k} \le a_{k+2} \quad \text{ for } k=0, 1, \ldots , n-2,\\
%	a = c_{0} < c_{1} < \cdots < c_{n} = b.
\end{cases} 
$$
\begin{figure}[h]
\includegraphics[width=400pt]{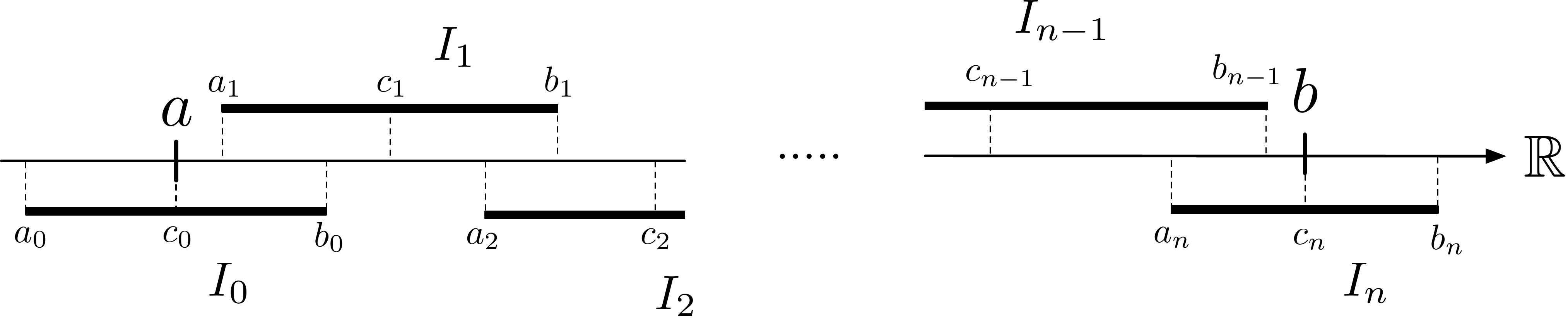}\\[8pt]
\caption{The interval $I = [a,b]$ is covered by finite $I_{k}$'s.}
\end{figure}

\noindent
Set $d_{0} := a,\,d_{n+1} := b$ and take $d_{k}$ arbitrarily as
\[
d_{1} \in (c_{0},b_{0}) \cap I_{1},\quad
d_{k} \in I_{k-1} \cap I_{k} \,(k=2, \cdots, n-1),\quad
d_{n} \in I_{n-1} \cap (a_{n}, c_{n}).
\]
Then $a = d_{0} < d_{1} < \cdots  < d_{n} < d_{n+1} =b$ and $(d_{k}, d_{k+1}) \subset I_{k}$.
Hence
$$
|\omega_{d_{0}, d_{1}}(0)| \leq  \varepsilon, \quad
|\omega_{d_{1}, d_{2}}(0)| \leq  \varepsilon, \quad\cdots,\quad
|\omega_{d_{n}, d_{n+1}}(0)| \leq  \varepsilon.
$$
Set $\varepsilon_{1} := \varepsilon$ and $\varepsilon_{k} := (\varepsilon_{1} + \varepsilon_{k-1})/(1 + \varepsilon_{1}\varepsilon_{k-1}) < 1$.
We then have
\[
  | \omega_{d_{0},d_{2}}(0)| 
  = | \omega_{d_{1}, d_{2}}(\omega_{d_{0}, d_{1}}(0) )| 
 \leq
 \frac{|\omega_{d_{0}, d_{1}}(0)|+ |\omega_{d_{1}, d_{2}}(0)|}
 {1 + |\omega_{d_{1}, d_{2}}(0)||\omega_{d_{0}, d_{1}}(0)|} \leq \varepsilon_2,
\]
Similarly by induction we have
\[
  |\omega_{d_{0}, d_{k}}(0)| = |\omega_{d_{k-1}, d_{k}}(\omega_{d_{0},d_{k-1}}(0))| \leq \varepsilon_{k}, \quad k=2, \ldots , n+1,
\]
Furthermore for any $t \in I$ there exists a unique
$k \in \{0, 1, \ldots, n \}$ such that
$d_{k} \leq t \leq d_{k+1}$.
Then we have 
$$
|\omega_{d_{0},t}(0)| = | \omega_{d_{k}, t}(\omega_{d_{0}, d_{k}}(0) )| 
 \leq
 \frac{|\omega_{d_{k}, t}(0)| + |\omega_{d_{0}, d_{k}}(0)|}
 {1 + |\omega_{d_{k}, t}(0)||\omega_{d_{0}, d_{k}}(0)|} 
  \leq
 \frac{\varepsilon_{0} + \varepsilon_{k-1}}
 {1 + \varepsilon_{0}\varepsilon_{k-1}} 
= \varepsilon_{k} < \varepsilon_{n+1},
$$ 
Since 
$\omega_{s,t} ( \omega_{d_{0},s} (0))= \omega_{d_{0},t}(0) $,
we have by Lemma \ref{lemma:basic-estimatesI}
\[
   |\omega_{s,t}(0)|
 \leq
 \frac{|\omega_{d_{0},s} (0)| + |\omega_{d_{0},t}(0)|}
 {1+|\omega_{d_{0},t}(0)| |\omega_{d_{0},c}(0)|}
 \leq
 \frac{\varepsilon_{n+1}  + \varepsilon_{n+1}}
 {1+ \varepsilon_{n+1}\varepsilon_{n+1}} < 1. 
\]
Therefore $\{ \omega_{s,t}\}_{(s,t) \in I_+^2}$ is hyperbolically bounded.

We show (viii)$\Rightarrow$(i). 
By Theorem \ref{thm:right_parameter} (ii), it suffices to show that 
$\{ \omega_{s,t} \}_{ (s,t) \in I_+^2}$
is continuous with respect to the right parameter.
Let
\[
   \rho := \sup_{(s,t) \in I_+^2} |\omega_{s,t}(0)| \in [0,1) .
\]
Then for any $(s,t) \in I_+^2$
we have
\begin{equation}
\label{eq:global-estimate}
   |\omega_{s,t}(z)| \leq \frac{|z|+\rho}{1+\rho|z|}, \quad 
    z \in \mathbb{D} .
\end{equation}
Notice that by \eqref{ineq:difference-between-omega-and-identity} in Lemma \ref{lemma:basic-estimatesII} and (viii),
$\omega_{s,t} $ converges to $\id_{\mathbb{D}}$ 
locally uniformly on $\mathbb{D}$
for any $c \in [a,b]$ 
as $t-s \searrow 0$ with $s \leq c \leq t$.

For any  $c \in (a,b]$
we show that $\omega_{a,t} \rightarrow \omega_{a,c}$ as 
$t \nearrow c$.
To see this, 
let $\{ t_n \}_{n=1}^\infty$ be a sequence
with $a \leq t_n \nearrow c$ as $n \rightarrow \infty$. 
Since $\{ \omega_{a,t_n} \}_{n=1}^\infty$ is uniformly  bounded, 
there exists a subsequence $\{ \omega_{a,t_{n_k}} \}_{k=1}^\infty$ converging to an analytic function $\omega$ locally uniformly 
on $\mathbb{D}$. 
By \eqref{eq:global-estimate} we have 
$|\omega (z)| \leq \frac{|z|+\rho}{1+\rho |z|}$, $z \in \mathbb{D}$.
Since $\omega_{t_{n_k},c} \rightarrow \id_{\mathbb{D}}$ 
locally uniformly on $\mathbb{D}$, 
we have
\[
  \omega_{a,c}(z) 
 = \lim_{k \rightarrow \infty} \omega_{t_{n_k},c} ( \omega_{a, t_{n_k}}(z)) 
 = \omega (z) .
\]
Thus we have shown that for any sequence 
$\{ \omega_{a,t_n} \}_{n=1}^\infty$ with $t_n \nearrow c$,
there exists a subsequence $\{ \omega_{a,t_{n_k}} \}_{k=1}^\infty$ converging to $\omega_{a,c}$ locally uniformly on $\mathbb{D}$.
Therefore $\omega_{a,t} $ converges 
to $\omega_{a,c}$ locally uniformly on $\mathbb{D}$
as $t \nearrow c$.

Now let $c \in [a,b)$.
Then since $\omega_{c,t} \rightarrow \id_{\mathbb{D}}$ as $t \searrow c$ 
locally uniformly on $\mathbb{D}$,
$\omega_{a,t} = \omega_{c,t} \circ \omega_{a,c}$ 
converges to $\omega_{a,c}$ locally uniformly on $\mathbb{D}$
as $t \searrow c$.
Therefore  $\{ \omega_{a,t} \}_{t \in I}$
is continuous.
\qed

\subsection{Proof of Theorem \ref{thm:continuity-wrt-s}}
The implications (i)$\Rightarrow$(v) and 
(i)$\Rightarrow$(vi) are clear.

Assuming  (vi) we show (i). 
By Theorem \ref{thm:cont_on_diagonal_set} (viii),
it suffices to show $ \2sidelim \omega_{s,t}'(0) = 1$ 
for each $c \in [a,b]$.
Let $a \leq s \leq c \leq t \leq b$.
Then by
$\omega_{s,b}(z) = \omega_{t,b} ( \omega_{s,t}(z))$
we have
\[
   \omega_{s,b}'(0) = \omega_{t,b}'( \omega_{s,t}(0)) \cdot \omega_{s,t}'(0) 
\]
and hence
\[
   \omega_{s,t}'(0)  = \frac{\omega_{s,b}'(0)}{\omega_{t,b}'( \omega_{s,t}(0))} . 
\]
Since $\omega_{s,t}(0) \rightarrow 0$ as $t-s \searrow 0$,
there exist $\delta > 0$ and $\rho_0 \in (0,1)$ such that
$|\omega_{s,t}(0)| \leq \rho_0$ for all $s$, $t$ 
with $s \leq c \leq t $ and $t-s \leq \delta$.
Since $|\omega_{t,b}| \leq 1$,
there exists $M > 0$ such that
$| \omega_{t,b}''(z)| \leq M$ for all $t \in [a,b]$ and $|z| \leq \rho$.
Hence
\[
  |\omega_{t,b}'( \omega_{s,t}(0)) - \omega_{t,b}'(0)| 
 \leq M |\omega_{s,t}(0)| \rightarrow 0
\]
as $t-s \searrow 0$.
Thus
\[
   \2sidelim \omega_{s,t}'(0)
 = \2sidelim \frac{\omega_{s,b}'(0)}{\omega_{t,b}'(0)} 
 = \frac{\omega_{c,b}'(0)}{\omega_{c,b}'(0)} = 1 .
\]

To show that (v)$\Rightarrow$(i), we need the following lemma.

\begin{lemma}
\label{Lemma3-1}
Suppose $\{ \omega_{s,t}\}_{(s,t) \in [a , b]_+^2}$ 
is an evolution family which is  continuous with respect to the left parameter 
and hyperbolically bounded with 
$\rho = \sup_{a \leq s \leq t \leq b} |\omega_{s,t}(0)|$.
If there exists $\gamma \in (a, b )$ and $r \in (0,1)$ such that 
$\omega_{\gamma , b}$ is univalent in 
$\mathbb{D}((r + \rho)/(1+\rho r ))$,
then
$\{ \omega_{s,t}\}_{(s,t) \in [a , b]_+^2}$ is jointly continuous
at any $(s_0, \gamma)$ with $a \leq s_0 \leq \gamma$.
\end{lemma}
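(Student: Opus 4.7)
The plan is to reduce joint continuity at $(s_0,\gamma)$ to three separate convergences:
\textbf{(a)} $\omega_{s,\gamma}\to\omega_{s_0,\gamma}$ as $s\to s_0$ in $[a,\gamma]$;
\textbf{(b)} $\omega_{t,\gamma}\to\id_\D$ as $t\nearrow\gamma$;
\textbf{(c)} $\omega_{\gamma,t}\to\id_\D$ as $t\searrow\gamma$.
With these, a sequence $(s_n,t_n)\to(s_0,\gamma)$ in $[a,b]_+^2$ is handled by dividing into subcases according to where $s_n,t_n$ sit relative to $\gamma$, applying a suitable semigroup factorization (for example $\omega_{s_n,\gamma}=\omega_{t_n,\gamma}\circ\omega_{s_n,t_n}$ when $s_n\le t_n\le\gamma$, or $\omega_{s_n,t_n}=\omega_{\gamma,t_n}\circ\omega_{s_n,\gamma}$ when $s_n\le\gamma\le t_n$, and similarly on the other sides when $s_0=\gamma$), and then passing to subsequential limits by normality together with the continuity of composition in $\B$.

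Set $R:=(r+\rho)/(1+\rho r)$. Lemma \ref{lemma:basic-estimatesI} gives the uniform bound $|\omega_{s,t}(z)|\le(|z|+\rho)/(1+\rho|z|)$ and in particular $\omega_{s,t}(\D(r))\subset\D(R)$. Since $\omega_{\gamma,b}$ is univalent on $\D(R)$, the inverse $\phi:=\bigl(\omega_{\gamma,b}|_{\D(R)}\bigr)^{-1}$ is analytic on the open set $\omega_{\gamma,b}(\D(R))$. For $s\le\gamma$, the factorization $\omega_{s,b}=\omega_{\gamma,b}\circ\omega_{s,\gamma}$ combined with $\omega_{s,\gamma}(\D(r))\subset\D(R)$ gives
\[
\omega_{s,\gamma}(z)=\phi(\omega_{s,b}(z)),\qquad z\in\D(r).
\]
The continuity of $s\mapsto\omega_{s,b}$ and of $\phi$ thus produces $\omega_{s,\gamma}\to\omega_{s_0,\gamma}$ locally uniformly on $\D(r)$. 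Because $\{\omega_{s,\gamma}\}\subset\B$ is a normal family, every subsequential limit on $\D$ must coincide with $\omega_{s_0,\gamma}$ on $\D(r)$, hence with $\omega_{s_0,\gamma}$ throughout $\D$ by the identity theorem. This proves (a), and specializing $s=t\nearrow\gamma$ yields (b).

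For (c), take $t_n\searrow\gamma$ and use $\omega_{\gamma,b}=\omega_{t_n,b}\circ\omega_{\gamma,t_n}$. Left-parameter continuity forces $\omega_{t_n,b}\to\omega_{\gamma,b}$, and normality of $\{\omega_{\gamma,t_n}\}\subset\B$ allows one to pass to a subsequential limit $\omega_{\gamma,t_n}\to\omega\in\B$. By continuity of composition in $\B$, passing to the limit yields the functional equation $\omega_{\gamma,b}=\omega_{\gamma,b}\circ\omega$. For $|z|<r$, Lemma \ref{lemma:basic-estimatesI} gives $|\omega(z)|<R$, so both $z$ and $\omega(z)$ lie in $\D(R)$; univalence of $\omega_{\gamma,b}$ there forces $\omega(z)=z$ on $\D(r)$, and the identity theorem extends this to $\omega=\id_\D$ on $\D$. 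Every subsequential limit being $\id_\D$, (c) follows.

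The chief technical point is step (c): right-parameter continuity is not in the hypotheses, so $\omega_{\gamma,t}\to\id_\D$ cannot be extracted directly, and the univalence of $\omega_{\gamma,b}$ on $\D(R)$ is used essentially to identify the subsequential limit via the functional equation and the identity theorem. The remaining steps are routine applications of normality, the hyperbolic bound, and the continuity of composition in $\B$ noted in the preliminaries.
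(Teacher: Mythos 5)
Your proof is correct. It rests on exactly the same mechanism as the paper's: extract a locally uniform limit by normality and the hyperbolic bound, feed it into the semigroup identity $\omega_{t,b}\circ\omega_{s,t}=\omega_{s,b}$, use left-parameter continuity to control the outer and right-hand factors, and then cancel $\omega_{\gamma,b}$ on $\mathbb{D}((r+\rho)/(1+\rho r))$ by its univalence, finishing with the identity theorem. The difference is purely organizational: the paper runs this as a single subsequence argument applied directly to an arbitrary sequence $(s_n,t_n)\to(s_0,\gamma)$ — the subsequential limit $\omega$ of $\omega_{s_{n_k},t_{n_k}}$ satisfies $\omega_{\gamma,b}\circ\omega=\omega_{s_0,b}=\omega_{\gamma,b}\circ\omega_{s_0,\gamma}$, and injectivity on $\mathbb{D}(R)$ identifies $\omega$ at once — so no case division on the position of $s_n,t_n$ relative to $\gamma$ and no separate limits (a), (b), (c) are needed. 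Your decomposition costs you the recombination step (subcases plus the continuity of composition in $\B$), and your step (a) invokes the analytic local inverse $\phi$ of $\omega_{\gamma,b}$, a marginally heavier tool than the bare injectivity the paper uses; on the other hand your version isolates the one genuinely delicate point, namely (c), where univalence is indispensable because right-parameter continuity of $\omega_{\gamma,t}$ is not among the hypotheses. Both arguments are sound; the paper's is simply more economical.
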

\begin{proof}
It suffices to show that 
for any sequence $\{ (s_n, t_n ) \}_{n=1}^\infty$ in  $[a , b]_+^2$
with $(s_n , t_n ) \rightarrow (s_0, \gamma ) $,
one can choose a subsequence  $\{ (s_{n_k}, t_{n_k} ) \}_{k=1}^\infty$
such that $\omega_{s_{n_k}, t_{n_k}} \rightarrow \omega_{s_0, \gamma }$
locally uniformly in $\mathbb{D}$.

Since $\{ \omega_{s_n, t_n } \}_{n=1}^\infty$ is bounded and forms a normal family,
there is a subsequence $\{ \omega_{s_{n_k}, t_{n_k} } \}_{k=1}^\infty$
which converges to a function $\omega \in \mathcal{A} $ 
locally uniformly in $\mathbb{D}$.
Notice that by Lemma 2.1,
\[
   |\omega (z)|
   = \lim_{k \rightarrow \infty} |\omega_{s_{n_k}, t_{n_k}}(z)|
   \leq
   \lim_{k \rightarrow \infty} 
   \frac{|z| + |\omega_{s_{n_k}, t_{n_k}}(0) |}{1+ |\omega_{s_{n_k}, t_{n_k}}(0) ||z| } 
   \leq
    \frac{|z|+ \rho}{1+\rho |z|} .
\]
From the identity
\[
   \omega_{t_{n_k}, b } ( \omega_{s_{n_k}, t_{n_k}}(z))
 =
 \omega_{s_{n_k}, b} (z)
\]
it follows by letting $k \rightarrow \infty$ that 
\[
   \omega_{\gamma , b} (\omega(z)) = \omega_{s_0 , b } (z) 
 = \omega_{\gamma , b} (\omega_{s_0, \gamma }(z)) .
\]
Since both $\omega (z)$ and $\omega_{s_0,t_0} (z)$ belong to $\mathbb{D}((r+\rho)/(1+r \rho ))$ for all $z$ with $|z|< r$ and $\omega_{\gamma , b}$ is univalent in $\mathbb{D}((r+\rho)/(1+r \rho ))$,
we obtain $\omega (z) = \omega_{s_0, \gamma }(z)$ for $|z|< r$.
By the identity theorem for analytic functions 
we have $\omega = \omega_{s_0, \gamma }$,  and hence 
$\omega_{s_{n_k}, t_{n_k}} \rightarrow \omega_{s_0, \gamma }$
locally uniformly in $\mathbb{D}$.
\end{proof}

We show (v)$\Rightarrow$(i). Let $\rho := \sup_{a \leq s \leq t \leq b} |\omega_{s,t}(0)|$
and 
\begin{align*}
   J := & \, \left\{ c \in [a,b] : 
  \{ \omega_{s,t} \}_{[a,b]_+^2 } \text{ is jointly continuous on } [c,b]_+^2 
 \text{ and the} \right.
\\[-5pt]
   &   \quad \left. \text{ subfamily }\{ \omega_{s,t} \}_{[a,c]_+^2 } 
 \text{ is continuous with respect to the left parameter} \right\}.
\end{align*}
Since $b \in J$ and $J \not=\emptyset$,  
it suffices to show that $J$ is open and closed in $[a,b]$.

{\bf 1}. 
We show that  if $c \in J$, then $[c,b] \subset J$. 
Let $c < c' \leq b$.
Then it is obvious that 
$\{ \omega_{s,t}\}$ is jointly continuous on $[c',b]_+^2$,
and that the mapping 
$[a,c] \ni s \mapsto \omega_{s,c'} = \omega_{c,c'} \circ \omega_{s,c} \in \mathcal{A}$ 
is continuous.
Because $\{ \omega_{s,t} \}$ is jointly continuous on $[c,b]_+^2$,
the mapping 
$[c, c'] \ni s \mapsto \omega_{s,c'}  \in \mathcal{A}$ 
is also continuous.
Therefore the extended mapping 
$[a, c'] \ni s \mapsto \omega_{s,c'}  \in \mathcal{A}$ is continuous,
i.e.,
$\{ \omega_{s,t} \}_{[a,c']_+^2 } $  is continuous with respect to the left parameter.
Thus $c' \in J$.

{\bf 2}.
We show that $J$ is open.
Let $c \in J$ and $r$ be fixed with $0<r<1$.
If $c=a$, then by {\bf 1}, $c$ is an interior point of the interval $[a,b]$.
Assume $a < c \leq b$.
Take $\sigma $ as
$0< \sigma < 1$ and 
\[
   \frac{r+\rho}{1+\rho r} < \frac{\sigma}{1+\sqrt{1-\sigma^2}} .
\]
Since $\omega_{s,c}(0) \rightarrow \omega_{c,c}(0)=0$
and $\omega_{s,c}'(0) \rightarrow \omega_{c,c}'(0)=1$,
there exists $\delta > 0$ such that
\[
   \frac{|\omega_{s,c}'(0)}{1-|\omega_{s,c}(0)|^2} > \sigma \quad
  \text{ for } \,c-\delta \leq s \leq c .
\]
Let 
\[
  \tilde{\omega}_{s,c}(z)
 = \frac{\omega_{s,c}(z)- \omega_{s,c}(0)}
 {1- \overline{\omega_{s,c}(0)} \omega_{s,c}(z)}, \quad z \in \mathbb{D} .
\]
Then
$\tilde{\omega}_{s,c} \in \mathfrak{B}$,
$\tilde{\omega}_{s,c}(0) = 0$ and 
\[
   |\tilde{\omega}_{s,c}'(0)|
  = \frac{|\omega_{s,c}'(0)}{1-|\omega_{s,c}(0)|^2} > \sigma \quad
 \text{ for } \,c-\delta \leq s \leq c .
\]
Therefore by Lemma 2.?, for $c-\delta \leq s \leq c$,  
$\tilde{\omega}_{s,c}$ is univalent 
in $\mathbb{D}(\sigma/(1+\sqrt{1-\sigma^2}))$
and so is $\omega_{s,c}$.
Since 
$\mathbb{D}( (r+\rho)/(1+\rho r)) \subset \mathbb{D}(\sigma/(1+\sqrt{1-\sigma^2}))$,
by applying Lemma \ref{Lemma3-1} we have that
$\{ \omega_{s,t} \}_{[a,b]_+^2}$ is jointly continuous on the set (see Figure \ref{figure3})
\[
     A_{c-\delta , c} : = \bigcup_{c - \delta \leq t \leq c } \{ (s,t) : a \leq s \leq t \} .
\]
Specially this implies for each $t$ with $c-\delta \leq t \leq c$
the mapping $[a,t] \ni s \mapsto \omega_{s,t} \in \mathcal{A}$ is continuous, i.e.,
$\{ \omega_{s,t} \}_{[a,t]_+^2}$ is continuous with respect to the left parameter.

\begin{figure}[h]
\includegraphics[width=380pt]{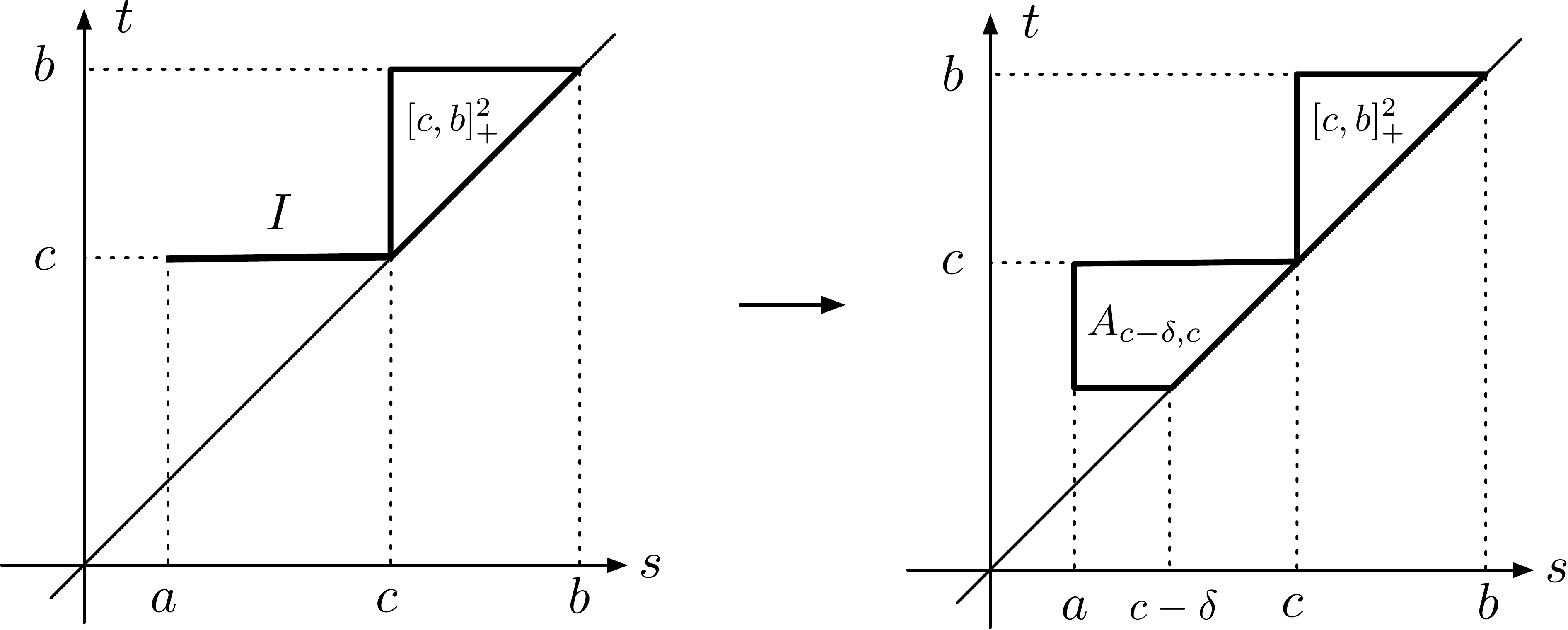}\\[8pt]
\caption{Left: If $c \in J$, then $\{ \omega_{s,t} \}$ is jointly continuous on $[c,b]_{+}^{2}$ and continuous w.r.t. the left parameter on $I$. 
Right: Then there exists $\delta >0$ such that $\{ \omega_{s,t} \}$ is jointly continuous on $A_{c-\delta,c}$. Applying Lemma \ref{Lemma2-4}, $\{ \omega_{s,t} \}$ is jointly continuous on $[c-\delta, b]_{+}^{2}$.}
\label{figure3}
\end{figure}

Furthermore $\{ \omega_{s,t} \}_{[a,b]_+^2}$ is jointly continuous
on $[c-\delta, c]_+^2$. 
Combining this and the joint continuity 
on $[c, b]_+^2$ we obtain  that 
$\{ \omega_{s,t} \}_{[a,b]_+^2}$ is jointly continuous 
on $[c-\delta, b]_+^2$ by Lemma \ref{Lemma2-4}.
Thus $[c-\delta , c] \subset J$.
By {\bf 1} we also have $[c,b] \subset J$.
Hence $[c-\delta, b] \subset J$ and we conclude $J$ is open.

{\bf 3}.
We show that $J$ is closed.
To see this let $J \ni c_n \searrow c_0$.
Then, since $\{ \omega_{s,t} \}$ is jointly continuous on $[c_n,b]_+^2$ 
for all $n \in \mathbb{N}$,
$\omega_{s , b}$ is univalent in $\mathbb{D}$ for each $s$ with $c_0 < s \leq b$.
Thus by Hurwitz's theorem and (EF1), 
the limit $\omega_{c_0,b} = \lim_{n \rightarrow \infty} \omega_{c_n, b}$ 
is also univalent in $\mathbb{D}$.
It follows from Lemma that 
$\{ \omega_{s,t} \}$ is jointly continuous on $A_{c_0 , b} = \bigcup_{c_0 \leq t \leq b} \{ (s,t) : a \leq s \leq t \} .$ 
By repeating the argument in {\bf 2} we conclude $c_0 \in J$.
\qed

\section{Remarks}
\label{Section4}
\subsection{Remarks on the main theorems}
Comparing with Theorem \ref{thm:right_parameter} and Theorem \ref{thm:continuity-wrt-s}, 
the continuity in the right parameter seems to be a stronger assumption than left one.
One of the reasons for this can be seen in terms of the theory of Loewner chains as follows.

For an evolution family $\{\omega_{s,t}\}_{0 \le s \le t < \infty}$ and a fixed $T > 0$, $\{f_{t} := \omega_{t,T}\}_{t \in [0,T]}$ is called a \textit{Loewner chain}.
If $\{\omega_{s,t}\}$ is jointly continuous, then by Theorem \ref{thm:right_parameter} both $\{\omega_{s,t}\}$ and $\{f_{t}\}$ are families of univalent functions on $\D$ and the following is satisfied;
\begin{equation}
\label{evol-Loewner}
\omega_{s,t} = f_{t}^{-1} \circ f_{s},
\end{equation}
that is an easy consequence of the equation $\omega_{s,T} = \omega_{t,T} \circ \omega_{s,t}$.
Equation \eqref{evol-Loewner} implicitly tells us that the continuity in the right parameter $t$ will deduce that $f_{t}^{-1}$ is well-defined (namely $f_{t}$ is univalent on $\D$) and continuous in $t$ (namely so is $f_{s}$ in $s$ as well).
In contrast, if $\{\omega_{s,t}\}$ is continuous with respect to the left parameter $s$, this implies that $f_{s}$ is well-behaved but nothing says about $f_{t}^{-1}$.

In view of Theorem \ref{thm:continuity-wrt-s}, the following open problem arises.

\begin{problem}
If an evolution family $\{\omega_{s,t}\}$ is continuous with respect to the left parameter, then is it jointly continuous?
\end{problem}
The answer of this problem is probably negative, though we do not have any idea to show it.
If there exists a counterexample, it should be non-hyperbolically bounded.

\subsection{A discontinuous evolution family}

Following the arguments in the proofs of our main theorems, 
one finds that the semigroup property (EF3) in Definition \ref{def:evolutiuon-family} works quite effectively at key points of the proofs. Hence the reader may expect that even some continuity in a parameter is deduced from the assumption (EF1)-(EF3). Here we give, however, an example of discontinuous evolution families. 

Let $f : \R \to \R$ be an \textit{additive function}, i.e., $f$ satisfies
\begin{equation}
\label{additive-function}
f(x + y) = f(x) + f(y) \quad \text{ for all $x, y \in [0,\infty)$}.
\end{equation}
It is known that there exists a discontinuous additive function, sometimes called the \textit{Hamel function}.
For details of additive functions and the Hamel functions, see \cite[pp.128--130]{MR2467621}. 
Remark that it follows from \eqref{additive-function} that $f(0)=0$.

By using the Hamel function $f$, we define $\omega_{s,t}$ as
\begin{equation}
\omega_{s,t} := e^{i f(t-s)}z \quad (z \in \D,\,(s,t) \in [0,\infty)_{+}^{2}).
\end{equation}
Then it is easy to see that the family $\{\omega_{s,t}\}_{[0,\infty)_{+}^{2}}$ is an evolution family, but not continuous with respect to both left and right parameters.

\bibliographystyle{amsalpha}
%\bibliographystyle{amsplain}
%\bibliography{/Users/ikkeihotta/Dropbox/bibdata.bib}

\def\cprime{$'$}
\providecommand{\bysame}{\leavevmode\hbox to3em{\hrulefill}\thinspace}
\providecommand{\MR}{\relax\ifhmode\unskip\space\fi MR }
% \MRhref is called by the amsart/book/proc definition of \MR.
\providecommand{\MRhref}[2]{%
  \href{http://www.ams.org/mathscinet-getitem?mr=#1}{#2}
}
\providecommand{\href}[2]{#2}

\end{document}